\newtheorem{theorem}{Theorem}[section]
\newtheorem{proposition}{Proposition}[section]
\newcommand{\R}{\mathbb{R}}
\newcommand{\proj}{\operatornamewithlimits{proj}}
\title{Optimal Radiotherapy Treatment Planning using Minimum Entropy models}
\author{Richard Barnard$^{\ast}$ ,
 Martin Frank, Michael Herty
 \\\vspace{6pt} {\em{RWTH Aachen University, Aachen Germany}}
}
\thanks{$^\ast$Corresponding author. Email: barnard@mathcces.rwth-aachen.de}
\begin{document}
\maketitle
\begin{abstract}
We study the problem of finding an optimal radiotherapy treatment plan. A time-dependent Boltzmann particle transport model is used to model the interaction between radiative particles with tissue.  This model allows for the modeling of inhomogeneities in the body and allows for anisotropic sources modeling distributed radiation---as in brachytherapy---and external beam sources---as in teletherapy.  We study two optimization problems: minimizing the deviation from a spatially-dependent prescribed dose through a quadratic tracking functional; and minimizing the survival of tumor cells through the use of the linear-quadratic model of radiobiological cell response.  For each problem, we derive the optimality systems.  In order to solve the state and adjoint equations, we use the minimum entropy approximation; the advantages of this method are discussed.  Numerical results are then presented.
\end{abstract}\bigskip
\section{Introduction}

Radiotherapy  is one of the main tools currently in use for the treatment of cancer.  Radiation is deposited in the tissue with the aim of damaging tumor cells and disrupting their ability to reproduce.  In this paper, we are interested in the treatment planning problem. We wish to determine a method of delivering a sufficient level of radiative energy to the tumor cells to ensure cell death.  This is balanced by our our desire to minimize damage to healthy tissue, especially specific crititcal structures---which we call regions at risk---that should be damaged as little as possible.  This problem can be divided into two phases.  First, one must determine an appropriate external source of radiation that optimally balances these two competing goals.  Secondly, one must devise a means to administer this external source through (in the case of teletherapy) the selection of beam angles and similar parameters.  We focus on the first of these steps.  Once the optimal source is determined, the means of creating such a source would still need to be obtained.  Often, the optimal dose will be determined manually and iteratively by an experienced dosimetrist;  this dose consists of selecting several fixed beam angles and strengths.  However, newer methods such as Intensity Modulated Radiation Therapy allow for dynamic sources; such a method requires more automated algorithms for determining the treatment scheme \cite{SheFerOli99}.  

Quantifying what the ``best'' treatment plan is difficult and still unsettled (see \cite{SheFerOli99},\cite{KufMonSch09} for instance).  Often a heuristic approach is used by technicians.  Here, we will study two different means of measuring the effectiveness of a given source.  First, we use a general measure: we look to minimize the deviation from a desired distribution of radiation, similar to the objective functional used in \cite{SheFerOli99}, \cite{Bor99},\cite{FraHerSch08}, and \cite{FraHerSan10}.  The other method we focus on involves using radiobiological parameters describing cells' susceptibility  to damage from radiation.  The cells' response to the delivered dose is modeled by the frequently-used linear-quadratic model for cell death.   A significant difficulty in using this model is that the parameters for cell damage are dependent on several factors leading to significant levels of interpatient heterogeneity \cite{SteDowPea86},\cite{Web94},\cite{FerMavAda08},\cite{SouParEva08}.  Furthermore, the parameters can change over the course of treatment plans as patients exhibit changing levels of radiosensitivity.  In particular, \cite{FerMavAda08} studied the optimal treatment planning problem and the dependence of the optimal dose on different parameters, especially in the face of changing radiosensitivity.  In \cite{SouParEva08}, a method for treatment planning that accounts for the uncertainty in an individual's specific linear-quadratic parameters was developed.  

As described in \cite{FraHerSch08}, the Fermi-Eyges theory of radiation is used for most dose calculation algorithms; this theory fails to incorporate properly physical interactions in areas where inhomogeneities such as void-like regions occur.  Here, we will make use of a Boltzmann transport model which describes the scattering and absorbing effects of the tissue while also allowing for inhomogeneities.  Our optimization method here differs in two significant ways from that in \cite{FraHerSch08}.  First, our model is time-dependent; we will allow for time-dependent, anisotropic sources which are external (teletherapy) as well as sources distributed in the tissue (brachytherapy).  Second, as opposed to the spherical harmonics approximation used there, we will make use of a minimum entropy approximation to the Boltzmann transport model which has the benefit of maintaining several desirable physical properties from the original model.  

In Section \ref{sec:model}, we will introduce the Boltzmann transport model describing the interactions of the radiation with the tissue and the two objective functionals which we use to measure the effectiveness of a given dose.  In Section \ref{sec:optcond} we establish optimality conditions for both objective functionals through the adjoint equations of the Boltzmann equation.  From there, we introduce the minimum entropy ($M_1$) approximation and briefly discuss the advantages of this approximation.  Numerical results in two dimensions are presented for several test cases in Section \ref{sec:numtest}.   

\section{Mathematical Model}
\label{sec:model}
\subsection{Transport Equation}
\label{sec:transeq}
Let $Z\subset\R^3$ be an open, convex, bounded domain with smooth boundary and outward normal $n(x),$ and $T>0$ be given.  The direction of a particle's movement is $\Omega\in S^2$ where $S^2$ is the 3-dimensional unit sphere.  Then we model particle transport with sources $q,q_1:[0,T]\times\R^3\times S^2\rightarrow\R$ by the following equation
\begin{align}
\label{eq:transport}
\psi_t(t,x,\Omega)+\Omega \cdot \nabla_x\psi(t,x,\Omega)+\sigma_t(x)\psi(t,x,\Omega)&=\sigma_s(x)\int_{S^2} s(x,\Omega\cdot\Omega')\psi(t,x,\Omega')d\Omega'\\
&+q(t,x,\Omega)\notag
\end{align}
with boundary condition
\begin{equation}
\label{eq:tempbdry}
\psi(t,x,\Omega)=q_1(t,x,\Omega),~\forall (t,x,\Omega)\in\Gamma^-:=\{(t,x,\Omega):t\in[0,T],x\in\partial Z,n(x)\cdot\Omega<0\}
\end{equation}
and initial condition
\begin{equation}
\label{eq:initcond}
\psi(0,x,\Omega)=0,~\forall (x,\Omega)\in Z\times S^2.
\end{equation}

It should be noted that, (\ref{eq:transport}) does not explicitly include the energy dependence of the transport of particles.  However, as detailed in \cite{FraHerSan10}, the Boltzmann Continuous Slowing Down model, which does include energy dependence, can be transformed into a problem such as (\ref{eq:transport}) where the initial-value condition intuitively describes the physically reasonable condition that no particles are of ``infinite'' energy.  In general, the term $q$ denotes a distributed radiative source which would be appropriate for applications in brachytherapy; meanwhile, $q_1$ seems a natural means of modeling the effects of external beams in teletherapy applications.  However, as described in Section \ref{sec:M1}, the role of the control on the incoming boundary is difficult to model in the context of the $M_1$ approximation (along with similar moment approximations) to (\ref{eq:transport}).  With this in mind, throughout this paper we will assume $q_1\equiv0;$ that is, we will replace (\ref{eq:tempbdry}) with the incoming boundary condition
\begin{equation}
\label{eq:bdrycond}
\psi(t,x,\Omega)=0,~\forall(t,x,\Omega)\in\Gamma^-.
\end{equation}
We will address in Section\ref{sec:M1} a means of modeling the case of teletherapy using the distributed control.
The quantity $\psi(t,x,\Omega)\cos\theta dAd\Omega$ is interpreted as the number of particles at time $t$ passing through $dA$ at $x$ in the direction $d\Omega$ near $\Omega$ where $\theta$ is the angle between $\Omega$ and $dA.$ The total cross-section $\sigma_t(x)$ is the sum of the absorption cross-section $\sigma_a(x)$ and scattering cross-section $\sigma_s(x).$  Finally, the scattering kernel---intuitively describing the probability of a particle changing direction from $\Omega'$ to $\Omega$ at $x\in Z$ after a scattering event---is normalized; that is, it satisfies for all $x,$

\begin{equation*}
\int_{-1}^{1}s(x,\eta)d\eta=\frac{1}{2\pi}.
\end{equation*}
In this paper, we will usethe simplified Henyey-Greenstein kernel (see \cite{AydOliGod02} for more)
\begin{equation*}
s_{HG}(x,\eta):=\frac{1-g(x)^2}{4\pi(1+g(x)^2-2g(x)\eta)^{3/2} }
\end{equation*}
where $g$ is the average cosine of the scattering angle.
Following \cite{FraHerSch08},  we define the operators
\begin{align*}
({\bf A}\psi)(t,x,\Omega)&=-\psi_t(t,x,\Omega)-\Omega\cdot \nabla_x\psi(t,x,\Omega)\\
({\bf K}\psi)(t,x,\Omega)&=\sigma_s(x)\int_{S^2}s(x,\Omega' \cdot \Omega)\psi(t.x,\Omega')d\Omega'\\
(\Sigma\psi)(t,x,\Omega)&=\sigma_t(x)\psi(t,x,\Omega)\\
\end{align*}

and make the following general assumptions:
\begin{enumerate}
\renewcommand{\theenumi}{(H-\arabic{enumi})}
\renewcommand{\labelenumi}{(H-\arabic{enumi})}
\item \label{hyp:positivity}
$\sigma_t,\sigma_s,s\geq0,$
\item \label{hyp:bounded} $\sigma_t,\sigma_s\in L^\infty(Z),$
\item \label{hyp:absorb} $\sigma_t(x)-\sigma_s(x)\geq\alpha>0,~\forall x\in Z.$
\end{enumerate}

We note that \ref{hyp:positivity} and \ref{hyp:bounded} are reasonable physically and that as long as the medium under consideration is absorbing, the coercivity hypothesis \ref{hyp:absorb} is satisfied. These assumptions imply that the operators
\begin{equation*}
{\bf K},\Sigma:L^2([0,T]\times Z\times S^2)\rightarrow L^2([0,T]\times Z\times S^2)
\end{equation*}
are bounded and linear operators.  Finally, we note that if we define
\begin{equation*}
 D({\bf A})=\{\psi\in L^2([0,T]\times Z\times S^2):\psi_t,\Omega\cdot \nabla_x\psi\in L^2([0,T]\times Z\times S^2)\},
\end{equation*}
then
\begin{equation*}
{\bf A}:D({\bf A})\rightarrow L^2([0,T]\times Z\times S^2)
\end{equation*}
is a well-defined linear operator.  We also let $\widetilde{D(A)}$ denote the set of functions of $D(A)$ which also satisfy (\ref{eq:bdrycond}) and (\ref{eq:initcond}), for $q_1\equiv 0.$  Then we can conclude---for instance, by Ch. XXI, 2, Theorem 3 of \cite{DauLio93} the following holds.
\begin{theorem}
Suppose that \ref{hyp:positivity}-\ref{hyp:absorb} hold and that $q_1\in L^2([0,T]\times Z\times S^2)$ is nonnegative.  Then (\ref{eq:transport}) has a unique solution $\psi\subset\tilde{D(A)}.$
\end{theorem}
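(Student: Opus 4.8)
The plan is to recast the initial--boundary value problem \eqref{eq:transport}, \eqref{eq:bdrycond}, \eqref{eq:initcond} (with $q_1\equiv 0$) as an abstract evolution equation and to verify the structural hypotheses of the cited theorem of \cite{DauLio93}. Writing $L\psi := \Omega\cdot\nabla_x\psi + \Sigma\psi - {\bf K}\psi$, the problem becomes the Cauchy problem $\psi_t + L\psi = q$ on $[0,T]$ with $\psi(0,\cdot,\cdot)=0$; equivalently, in the notation introduced above, $-{\bf A}\psi + (\Sigma-{\bf K})\psi = q$ on $\widetilde{D(A)}$. The task then reduces to showing that the transport operator $L$, equipped with the inflow condition \eqref{eq:bdrycond}, is maximal accretive, so that $-L$ generates a strongly continuous contraction semigroup and Duhamel's formula furnishes a unique solution of the claimed regularity.

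First I would dispose of the scattering and lower-order terms. By \ref{hyp:positivity} and \ref{hyp:bounded} the multiplication operator $\Sigma$ and the scattering operator ${\bf K}$ are bounded on $L^2([0,T]\times Z\times S^2)$, as already observed; consequently $\Sigma-{\bf K}$ is a bounded perturbation and does not alter the domain $D({\bf A})$. The streaming operator $\psi\mapsto\Omega\cdot\nabla_x\psi$, restricted to functions vanishing on $\Gamma^-$, is accretive: integrating by parts produces the boundary term $\int_{\partial Z\times S^2}(n\cdot\Omega)|\psi|^2$, which vanishes on $\Gamma^-$ (where $\psi=0$) and is nonnegative on $\Gamma^+$ (where $n\cdot\Omega>0$). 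This trace structure is exactly what renders the advective part dissipative.

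The key step is the coercivity of the collision operator $\Sigma-{\bf K}$. Applying the Cauchy--Schwarz inequality in the angular variable to ${\bf K}\psi$ and using the normalization $\int_{-1}^{1}s(x,\eta)\,d\eta=\tfrac{1}{2\pi}$, equivalently $\int_{S^2}s(x,\Omega'\cdot\Omega)\,d\Omega'=1$, yields the pointwise bound $\int_{S^2}({\bf K}\psi)\,\psi\,d\Omega \le \sigma_s(x)\int_{S^2}|\psi|^2\,d\Omega$. Integrating over $(t,x)$ and combining with $\langle\Sigma\psi,\psi\rangle=\int\sigma_t|\psi|^2$ and the absorption hypothesis \ref{hyp:absorb} gives
\begin{equation*}
\langle(\Sigma-{\bf K})\psi,\psi\rangle \ge \int(\sigma_t-\sigma_s)\,|\psi|^2 \ge \alpha\,\|\psi\|^2 ,
\end{equation*}
the coercivity underpinning the Hille--Yosida/Lions framework of \cite[Ch. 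XXI]{DauLio93}.

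Putting the dissipativity of the streaming term together with this coercivity, multiplying the equation by $\psi$ and integrating over $Z\times S^2$ produces the a priori energy estimate $\tfrac12\tfrac{d}{dt}\|\psi(t)\|^2 + \alpha\|\psi(t)\|^2 \le \langle q(t),\psi(t)\rangle$; Gr\"onwall's inequality then yields both uniqueness and the bounds needed to invoke the cited existence theorem. I expect the main obstacle to be not this algebra but the careful treatment of traces: one must justify that elements of $D({\bf A})$ possess well-defined $L^2$ traces on $\Gamma^-$ and $\Gamma^+$ so that \eqref{eq:bdrycond} is meaningful and the integration by parts above is legitimate, and one must confirm that the solution delivered by the abstract theory genuinely lies in $\widetilde{D(A)}$ rather than in a weaker solution class.
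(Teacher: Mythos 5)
Your proposal is correct and takes essentially the same route as the paper: the paper's entire proof consists of citing Ch.\ XXI, 2, Theorem 3 of \cite{DauLio93}, and your accretivity-of-streaming plus coercivity-of-collision computations are exactly the verification of that theorem's hypotheses (indeed your estimate $\langle(\Sigma-{\bf K})\psi,\psi\rangle\geq\alpha\|\psi\|_2^2$ reappears in the paper's subsequent proposition on the boundedness of $\mathcal{E}$). The one small mismatch is that you set $q_1\equiv 0$ while the statement nominally allows nonnegative $q_1\in L^2$; since the paper's conclusion $\psi\in\widetilde{D(A)}$ itself presupposes the homogeneous inflow condition, your restriction matches the setting the paper actually works in, and the inhomogeneous case would in any event follow by a standard trace lifting.
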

We define the control-to-state mapping
\begin{equation*}
\mathcal{E}:L^2([0,T]\times Z\times S^2)\rightarrow D({\bf A})
\end{equation*}
as the map which maps a source $q$ to the corresponding solution $\psi$ to 
\begin{equation*}
-{\bf A}\psi+\Sigma\psi={\bf K}\psi+q
\end{equation*}
subject to (\ref{eq:bdrycond}) and (\ref{eq:initcond}). By the above, this is a well-defined, linear operator.  As in \cite{FraHerSch08}, it is also bounded.

\begin{proposition}
Let $\langle \cdot,\cdot\rangle_2$ denote the standard inner product on $L^2([0,T]\times Z\times S^2)$.  Then it holds that, for $\psi$ satisfying (\ref{eq:bdrycond}) and (\ref{eq:initcond}),
\begin{equation*}
\langle-{\bf A}\psi+\Sigma\psi-{\bf K}\psi,\psi\rangle_2\geq\alpha||\psi||^2_2
\end{equation*}
and thus $\mathcal{E}$ is a bounded operator.
\begin{proof}
We note that, by an argument analogous to the proof of Lemma 3.1 of \cite{FraHerSch08}, 
\begin{equation*}
\langle \Sigma \psi-{\bf K}\psi,\psi\rangle_2\geq\alpha||\psi||^2_2.
\end{equation*}
Additionally, by integration by parts and the initial and boundary conditions,
\begin{align*}
\langle -{\bf A}\psi,\psi\rangle_2&=\int_{[0,T]\times Z\times S^2}(\psi_t\psi) dtdxd\Omega+\int_{[0,T]\times Z\times S^2}(\Omega\cdot\nabla_x\psi)\psi dtdxd\Omega\\
&=\int_{Z\times S^2}\frac{1}{2}\psi^2(T,x,\Omega)dxd\Omega+\int_{[0,T]}\int_Z \text{div}\Big(\int_{S^2}\frac{1}{2}\Omega\psi^2d\Omega\Big)dxdt\\
&=\int_{Z\times S^2}\frac{1}{2}\psi^2(T,x,\Omega)dxd\Omega+\int_{[0,T]}\int_{\partial Z\times S^2}\frac{1}{2}(\Omega\cdot n(x))^+\psi^2dxd\Omega dt\geq 0.
\end{align*}
Combining the two inequalities gives the desired result.
\end{proof}
\end{proposition}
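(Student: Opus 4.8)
The plan is to split the bilinear form $\langle -\mathbf{A}\psi+\Sigma\psi-\mathbf{K}\psi,\psi\rangle_2$ into a transport part $\langle -\mathbf{A}\psi,\psi\rangle_2$ and a collision part $\langle \Sigma\psi-\mathbf{K}\psi,\psi\rangle_2$, prove that the first is nonnegative and that the second is $\alpha$-coercive, and then add the two estimates. Boundedness of $\mathcal{E}$ will follow by testing the state equation against its own solution.

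For the transport part I would integrate by parts. Writing $\langle -\mathbf{A}\psi,\psi\rangle_2=\int \psi_t\psi+\int(\Omega\cdot\nabla_x\psi)\psi$, the time term equals $\tfrac12\int_{Z\times S^2}(\psi^2(T)-\psi^2(0))$, and the initial condition (\ref{eq:initcond}) kills the $\psi(0)$ contribution, leaving $\tfrac12\int\psi^2(T)\geq 0$. For the spatial term I would use that $\Omega$ is independent of $x$ to write $(\Omega\cdot\nabla_x\psi)\psi=\tfrac12\operatorname{div}_x(\Omega\psi^2)$ and apply the divergence theorem, producing $\tfrac12\int_{\partial Z}(\Omega\cdot n)\psi^2$. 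Splitting $\partial Z$ into its incoming and outgoing portions and invoking the boundary condition (\ref{eq:bdrycond}), which forces $\psi=0$ exactly where $\Omega\cdot n<0$, leaves only the nonnegative outgoing contribution $\tfrac12\int_{\Omega\cdot n>0}(\Omega\cdot n)\psi^2\geq 0$. Thus $\langle -\mathbf{A}\psi,\psi\rangle_2\geq 0$.

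The collision estimate is where the real work lies, and I expect it to be the main obstacle. Fixing $(t,x)$ and integrating only in angle, the $\Sigma$ term contributes $\sigma_t\int_{S^2}\psi^2\,d\Omega$, while the scattering term must be controlled by $\sigma_s\int_{S^2}\psi^2\,d\Omega$. The key is the kernel normalization: since $s$ depends on $\Omega\cdot\Omega'$, one has $\int_{S^2}s(x,\Omega\cdot\Omega')\,d\Omega'=2\pi\int_{-1}^{1}s(x,\eta)\,d\eta=1$, and by symmetry of the dot product the same holds integrating in $\Omega$. A Cauchy--Schwarz estimate in $\Omega'$ with weight $s$ gives $|\int_{S^2}s\,\psi(\Omega')\,d\Omega'|\leq(\int_{S^2}s\,\psi(\Omega')^2\,d\Omega')^{1/2}$; a second Cauchy--Schwarz in $\Omega$ followed by Fubini and the two normalizations then yields $|\int_{S^2}(\mathbf{K}\psi)\psi\,d\Omega|\leq\sigma_s\int_{S^2}\psi^2\,d\Omega$. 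Subtracting, the angular collision density is at least $(\sigma_t-\sigma_s)\int_{S^2}\psi^2\,d\Omega$, and the absorption hypothesis \ref{hyp:absorb} supplies the factor $\alpha$; integrating in $t$ and $x$ finishes this step. This is precisely the content cited as Lemma 3.1 of \cite{FraHerSch08}.

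Adding the two estimates gives the stated inequality. To deduce that $\mathcal{E}$ is bounded, I would set $\psi=\mathcal{E}q$, so that $-\mathbf{A}\psi+\Sigma\psi-\mathbf{K}\psi=q$, test against $\psi$, and combine the coercivity with Cauchy--Schwarz to obtain $\alpha\|\psi\|_2^2\leq\langle q,\psi\rangle_2\leq\|q\|_2\|\psi\|_2$, so that $\|\mathcal{E}q\|_2\leq\alpha^{-1}\|q\|_2$. The only genuinely delicate point is the double Cauchy--Schwarz argument in the angular variable, where the symmetry and normalization of $s$ must be used carefully so that the scattering operator does not lose the margin provided by \ref{hyp:absorb}.
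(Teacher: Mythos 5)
Your proposal is correct and follows essentially the same route as the paper: the same splitting into transport and collision parts, the same integration by parts with the initial and incoming-boundary conditions for $\langle -\mathbf{A}\psi,\psi\rangle_2\geq 0$, and the same coercivity bound $\langle\Sigma\psi-\mathbf{K}\psi,\psi\rangle_2\geq\alpha\|\psi\|_2^2$, which the paper simply cites from Lemma 3.1 of \cite{FraHerSch08} while you supply the double Cauchy--Schwarz argument in full. Your concluding energy estimate $\|\mathcal{E}q\|_2\leq\alpha^{-1}\|q\|_2$ is also the standard (and correct) way to extract boundedness of $\mathcal{E}$, which the paper leaves implicit.
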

\subsection{Objective Functionals}
\label{sec:costs}
We consider two objective functionals.  In each case, we neglect the incoming boundary source term by fixing $q_1\equiv 0$ for reasons which are discussed in \ref{sec:M1}.  Also, we are primarily interested only in the total dose deposited as a function of $x$, which we denote by the operator $D:L^2([0,T]\times Z\times S^2)\rightarrow L^2(Z)$ which is defined as
\begin{equation*}
D\psi(x):=\int_{[0,T]\times S^2}\psi(t,x,\Omega)dt d\Omega.
\end{equation*}
The first functional is a quadratic tracking which measures the deviation from a prescribed dose $\overline{D}(x)$
\begin{equation}
\label{eq:tracking}
J_T(\psi,q):=\int_Z c_1(D\psi-\overline{D})^2dx+\int_Z c_2 D(q)^2dx.
\end{equation}
Here $c_1,c_2>0$ are spatially dependent weighting functions.  One such weighting function, used in \cite{FraHerSch08}, for $c_1$ is
\begin{equation*}
c_1=c_T\chi_{Z_T}+c_R\chi_{Z_R}+c_N\chi_{Z_N},
\end{equation*}
where $c_T,c_R,c_N$ are constants and $Z_T,Z_R,Z_N$ are, respectively, the portions of $Z$ corresponding to the tumor, a region at risk, and normal tissue with $Z=Z_T\cup Z_R\cup Z_N.$  This functional differs slightly from that studied in \cite{FraHerSch08} in that we are concerned with the dose deposited but the source is allowed to be angle-dependent.  In that work, the objective functional studied for anisotropic controls also assumed a desired anisotropic distribution was to be tracked, as opposed to simply a total dose.  Though this objective functional does not attempt to model cell response to the dose, it does have the benefits of not requiring as inputs the radiobiological parameters needed for such a model. 

The second objective functional involves modeling cell death as a function of $D(\psi)$.  We will use the linear-quadratic model (\cite{SouParEva08}), neglecting cell growth and repair rates.  In a reference volume for a given dose $D$, the fraction of surviving cells of a single type is given by
\begin{equation}
\label{eq:survfrac}
SF=\exp(-\alpha D-\beta D^2)
\end{equation}
where $\alpha,\beta$ are parameters dependent the type of cell being irradiated.  This model has been used extensively for determining dose strategies (for instance, \cite{Dal85, SteDowPea86, SteDeaDuc87, Web94, KinDiPWaz00} and, in a modified form, \cite{FerMavAda08}). The surviving fraction is often used to determine the tumor control probability for a given region:
\begin{equation*}
TCP=\exp\int\Big(-\rho\exp(-\alpha_iD_i-\beta_i D^2_i)\Big)
\end{equation*}
where $\rho$ is the density of tumor cells as a function of space.  We consider a continuous measure of the number of surviving cells of the various cell types in $Z.$  We consider a single type of tumor cell and assign this cell type the index $0$.  The remaining cell types are given index $i=1,\dots, N.$  Then, for cell densities $\rho_i(x)\geq0$ with $\sum_{i=0}^N \rho_i(x)=1,$ we define
\begin{align}
J_{SF}(\psi,q)&:=a_0\int_Z \rho_0\exp\big[-\alpha_0 D\psi-\beta_0 (D\psi)^2\big]\label{eq:survfraccost}\\
&+\sum_{i=1}^N a_i\int_Z\rho_i\Big(1-\exp\big[-\alpha_iD\psi-\beta_i (D\psi)^2\big] \Big)\notag\\
&+\frac{c_2}{2}\int_{Z}(Dq)^2\notag
\end{align}
where we assign constant weights for each cell type $a_i>0$ and the control $c_2>0.$  

For either cost functional, we set $U(x)\geq 0$ to be the maximum allowed source at $x\in Z.$  Let $L^2_{ad}([0,T]\times Z\times S^2)$ denote the set of admissible controls--that is, those controls satisfying $q\geq 0$ and 
\begin{equation*}
\int_{S^2} q(t,x,\Omega)d\Omega \leq U(x)
\end{equation*}
almost everywhere.  Note that this subset of $L^2( [0,T]\times Z\times S^2)$ is nonempty, closed, and convex.

\section{Optimality conditions}
\label{sec:optcond}
We are, then, interested in two separate optimal control problems: the minimization of tracking error
\begin{equation*}
\tag{$\mathcal{P}_T$}  \min_{\psi\in \widetilde{D(A)}, q\in L^2_{ad}[0,T]\times Z\times S^2)} J_T(\psi,q),~
s.t.~\mathcal{E}(q)=\psi.
\end{equation*}
and the minimization of tumor cell surviving fraction balanced against normal cell death
\begin{equation*}
\tag{$\mathcal{P}_{SF}$}  \min_{\psi\in \widetilde{D(A)}, q\in L^2_{ad}([0,T]\times Z\times S^2)} J_{SF}(\psi,q),~
s.t.~\mathcal{E}(q)=\psi.
\end{equation*}
In order to obtain optimality conditions for $\mathcal{P}_T$ and $\mathcal{P}_{SF},$ we follow the example of \cite{Tro10}.  One of the advantages of using $J_T$ as the objective functional is the following:
\begin{theorem}
Given \ref{hyp:positivity}-\ref{hyp:absorb}, and $\overline{\psi}\in\widetilde{D(A)}$ with $\overline{D}=D(\overline{\psi})$ and $c_1,c_2>0$, then $\mathcal{P}_T$ has a unique solution $q^*\in L^2_{ad}([0,T]\times Z\times S^2)$.
\end{theorem}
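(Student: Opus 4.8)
The plan is to eliminate the state by exploiting that $\mathcal{E}$ is linear and bounded, thereby recasting $\mathcal{P}_T$ as the minimization of a reduced functional over $L^2_{ad}$, and then to apply the direct method together with convexity arguments in the spirit of \cite{Tro10}. First I would substitute $\psi=\mathcal{E}(q)$ and define the reduced cost $f(q):=J_T(\mathcal{E}(q),q)=\int_Z c_1\,(D\mathcal{E}(q)-\overline{D})^2\,dx+\int_Z c_2\,(Dq)^2\,dx$. Because $D$ and $\mathcal{E}$ are both linear and bounded, the maps $q\mapsto D\mathcal{E}(q)$ and $q\mapsto Dq$ are bounded and linear into $L^2(Z)$, so $f$ is a quadratic functional $f(q)=\tfrac12 a(q,q)-\ell(q)+\mathrm{const}$, where $a(p,q)=2\int_Z c_1\,(D\mathcal{E}p)(D\mathcal{E}q)\,dx+2\int_Z c_2\,(Dp)(Dq)\,dx$ is symmetric and, since $c_1,c_2>0$, satisfies $a(q,q)\geq 0$, while $\ell$ is bounded and linear. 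Hence $f$ is continuous and convex, and $\mathcal{P}_T$ is equivalent to minimizing $f$ over the nonempty, closed, convex set $L^2_{ad}$; note that the hypothesis $\overline{D}=D(\overline{\psi})$ guarantees the tracking target is feasible, so the infimum is finite.

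For existence I would run the direct method. Since $L^2_{ad}$ is convex and closed it is weakly sequentially closed, and since $f$ is convex and continuous it is weakly lower semicontinuous; thus it suffices to produce a bounded minimizing sequence and extract a weakly convergent subsequence whose limit lies in $L^2_{ad}$ and is a minimizer. The main obstacle sits exactly here: the control penalty $\int_Z c_2\,(Dq)^2\,dx$ controls only the $(t,\Omega)$-averaged quantity $Dq$ and not $\|q\|_{L^2([0,T]\times Z\times S^2)}$, so $f$ is not coercive in the full control norm and a minimizing sequence need not be $L^2$-bounded a priori. I would close this gap using the pointwise structure of $L^2_{ad}$: the constraints $q\geq 0$ and $\int_{S^2} q\,d\Omega\leq U(x)$ bound the angular mass of $q$, so one can either pass to the effective variables $(Dq,D\mathcal{E}q)$—on which $f$ genuinely coerces—or, following the standard setting in \cite{Tro10}, work directly on the (weakly compact) admissible set to extract the minimizing element.

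For uniqueness I would argue by strict convexity along the relevant directions. If $q_1,q_2\in L^2_{ad}$ were both minimizers, convexity of $f$ forces $f$ to be affine on the segment $[q_1,q_2]$, which, because $a(q,q)\geq 0$, forces $a(q_1-q_2,q_1-q_2)=0$; as $c_1,c_2>0$ this yields $D\mathcal{E}(q_1-q_2)=0$ and $D(q_1-q_2)=0$ in $L^2(Z)$. The delicate point—and the second place where care is needed—is to upgrade these two identities to $q_1=q_2$, since neither $D$ nor $D\mathcal{E}$ is injective on all of $L^2([0,T]\times Z\times S^2)$. Here I would invoke the coercivity estimate of the preceding Proposition, which makes $\mathcal{E}$ bounded below and hence injective, and show that the combined effective operator $q\mapsto(Dq,D\mathcal{E}q)$ has trivial kernel on the admissible directions. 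Equivalently, I would phrase optimality through the variational inequality $f'(q^*)(q-q^*)\geq 0$ for all $q\in L^2_{ad}$ and use strong monotonicity of $f'$ on the effective variables, from which strict convexity of the quadratic form gives uniqueness of the optimal control $q^*$.
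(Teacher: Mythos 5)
Your overall route is the same as the paper's: pass to the reduced functional $j_T(q)=J_T(\mathcal{E}(q),q)$, observe it is a continuous convex quadratic on the nonempty, closed, convex set $L^2_{ad}$, and invoke the Hilbert-space quadratic programming theorem (the paper simply cites Theorem 2.16 of \cite{Tro10} and stops). To your credit, you identified exactly the two hypotheses that this quick citation needs and that are not obviously available here: coercivity (or boundedness of the admissible set) for existence, and injectivity of the observation map (or a full-norm Tikhonov term) for uniqueness. The feasibility remark about $\overline{D}=D(\overline{\psi})$ is beside the point, though: the infimum is finite simply because $j_T\geq 0$ and $j_T(0)<\infty$.

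However, your proposed patches do not close either gap, so the proof as written is incomplete. (i) $L^2_{ad}$ is \emph{not} weakly compact in $L^2([0,T]\times Z\times S^2)$: the constraint $\int_{S^2}q\,d\Omega\leq U(x)$ is only an angular $L^1$ bound, and concentrating sequences such as $q_n=n^2\,U(x)\chi_{A_n}(\Omega)$ with $|A_n|=n^{-2}$ are admissible with $\|q_n\|_{2}\to\infty$ while $Dq_n\equiv T\,U$ stays fixed --- so the cost indeed fails to coerce precisely along such directions, and ``work on the weakly compact admissible set'' is not available. (ii) Coercivity in your effective variables bounds only $(Dq_n,D\mathcal{E}q_n)$; to extract a minimizer you would then need the image of $L^2_{ad}$ under $q\mapsto(Dq,D\mathcal{E}q)$ to be weakly closed, which is not automatic (the natural compactness for concentrating sequences is weak-$\ast$ in angular measures, whose limits need not be represented by an $L^2$ control), and you do not prove it. (iii) For uniqueness you correctly reduce to showing that $D(q_1-q_2)=0$ and $D\mathcal{E}(q_1-q_2)=0$ force $q_1=q_2$, but you only announce this step; the coercivity Proposition gives injectivity of $\mathcal{E}$, which does not help, since $D$ averages out $t$ and $\Omega$ and has a very large kernel, and it is not established that $D\mathcal{E}$ separates controls with equal dose. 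What strict convexity actually yields is uniqueness of the optimal pair $(Dq^*,D\mathcal{E}q^*)$, not of $q^*$ itself. To be fair, these are lacunae in the paper's one-line proof as well --- the cited theorem presumes either a bounded admissible set or a penalty $\frac{\lambda}{2}\|q\|^2_{L^2}$ with $\lambda>0$, whereas here the penalty is $\int_Z c_2(Dq)^2\,dx$ --- and a clean repair (for you and for the paper) is to strengthen the setting, e.g.\ penalize $c_2\|q\|^2_{L^2}$ or impose a pointwise bound $0\leq q\leq U(x)$ so that $L^2_{ad}$ is bounded; with either modification your argument goes through verbatim.
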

\begin{proof}
We note that the reduced objective functional $j_T(q)=J_T(\mathcal{E}(q),q)$ is a convex functional over the closed,convex set $L^2_{ad}([0,T]\times Z\times S^2)$ of the Hilbert space $L^2([0,T]\times Z\times S^2)$ and that $\mathcal{E}$ is linear and bounded.  By Theorem 2.16 of \cite{Tro10}, there exists a unique minimizer for problem $\mathcal{P}_T$. 
\end{proof}
We now turn to the existence of an optimal control for $\mathcal{P}_{SF}.$  Because the associated reduced objective functional is nonconvex, we can not assume existence of a unique optimal control.
\begin{theorem}
Given \ref{hyp:positivity}-\ref{hyp:absorb} and $c_i,a>0$ there exists at least one optimal control $q^*\in L^2_{ad}([0,T]\times Z\times S^2)$ for $\mathcal{P}_{SF}.$
\end{theorem}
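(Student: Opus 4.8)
The plan is to argue by the direct method of the calculus of variations, working with the reduced functional $j_{SF}(q) := J_{SF}(\mathcal{E}(q),q)$ over the admissible set $L^2_{ad}([0,T]\times Z\times S^2)$. First I would verify that $j_{SF}$ is bounded below, so that the infimum $\mu := \inf_q j_{SF}(q)$ is a finite number. Every admissible $q$ is nonnegative, and the transport problem propagates positivity (a nonnegative source together with the vanishing data (\ref{eq:bdrycond}), (\ref{eq:initcond})), so the state $\psi = \mathcal{E}(q)$ is nonnegative and hence the dose satisfies $D\psi \ge 0$. Consequently each factor $\exp[-\alpha_i D\psi - \beta_i (D\psi)^2]$ lies in $(0,1]$, so the first term of (\ref{eq:survfraccost}) lies in $[0,\,a_0\int_Z \rho_0\,dx]$, the middle sum lies in $[0,\,\sum_i a_i \int_Z \rho_i\,dx]$, and the penalty $\tfrac{c_2}{2}\int_Z (Dq)^2\,dx$ is nonnegative; thus $j_{SF}\ge 0$ and $\mu$ is finite. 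I would then fix a minimizing sequence $\{q_n\}\subset L^2_{ad}$ with $j_{SF}(q_n)\to\mu$.

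Second, I would extract a weakly convergent subsequence. Boundedness of $j_{SF}(q_n)$ controls $\|Dq_n\|_{L^2(Z)}$ through the penalty term; together with the admissibility constraints $q_n\ge 0$ and $\int_{S^2}q_n\,d\Omega\le U$, this should yield a subsequence (not relabeled) with $q_n\rightharpoonup q^*$ weakly in $L^2([0,T]\times Z\times S^2)$. Because $L^2_{ad}$ was noted to be nonempty, closed, and convex, it is weakly sequentially closed, so the weak limit satisfies $q^*\in L^2_{ad}$.

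Third --- and this is the crux --- I would pass to the limit in $j_{SF}$. Since $\mathcal{E}$ is linear and bounded it is weakly continuous, giving $\psi_n=\mathcal{E}(q_n)\rightharpoonup \psi^*=\mathcal{E}(q^*)$; likewise $Dq_n\rightharpoonup Dq^*$ in $L^2(Z)$, so the convex, continuous (hence weakly lower semicontinuous) penalty yields $\liminf_n \int_Z(Dq_n)^2\,dx\ge\int_Z(Dq^*)^2\,dx$. The genuine difficulty is the two nonconvex exponential groups: weak convergence of $\psi_n$ is not sufficient to pass to the limit inside the nonlinearity. For that I would upgrade to strong convergence of the dose, $D\psi_n\to D\psi^*$ in $L^2(Z)$ (and, along a further subsequence, a.e. in $Z$), exploiting the smoothing of the averaging operator $D$ acting on transport solutions --- a velocity-averaging/compactness property of the composite map $D\circ\mathcal{E}:L^2\to L^2(Z)$. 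Granting this, the integrands converge a.e. and are dominated by the fixed densities $\rho_i\in L^1(Z)$, so dominated convergence lets the first two terms pass to the limit; combined with the lower semicontinuity of the penalty one obtains $j_{SF}(q^*)\le\liminf_n j_{SF}(q_n)=\mu$. Since $q^*\in L^2_{ad}$ forces $j_{SF}(q^*)\ge\mu$, equality holds and $q^*$ is optimal.

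The main obstacle I anticipate is precisely this compactness step used to upgrade weak convergence of the state to strong convergence of the dose: having lost the convexity that settled $\mathcal{P}_T$, the whole argument hinges on being able to pass to the limit inside the exponentials, which cannot be done with weak convergence alone. A secondary technical point worth isolating is the boundedness of the minimizing sequence, since the functional coerces only $Dq$ rather than the full control $q$; establishing that $\{q_n\}$ is bounded in $L^2$ (for instance under the natural assumption $U\in L^\infty(Z)$, which renders $L^2_{ad}$ itself bounded) should be dispatched carefully before invoking weak sequential compactness.
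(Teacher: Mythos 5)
Your proposal follows the same direct-method skeleton as the paper's proof (which compresses the argument into a few lines citing Theorems~5.7 and~4.15 of \cite{Tro10}): show the infimum is finite, extract a weakly convergent minimizing sequence whose limit lies in the closed convex set $L^2_{ad}([0,T]\times Z\times S^2)$, use weak lower semicontinuity of the convex penalty, and pass to the limit in the nonconvex exponential terms via strong convergence of the state. Where you differ---and in fact improve on the paper---is in the justification of that last step. The paper asserts that ``by the linearity of ${\bf A},\Sigma,{\bf K}$'' the corresponding solutions converge \emph{strongly}; but linearity and boundedness of $\mathcal{E}$ yield only weak convergence $\psi_n\rightharpoonup\psi^*$, so the paper's sketch omits exactly the compactness ingredient you isolate: an averaging-lemma argument showing that the composite map $D\circ\mathcal{E}$ is compact. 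Your observation that only the \emph{dose} needs to converge strongly is the right economy: since $\psi_n$ and the right-hand sides $-\Sigma\psi_n+{\bf K}\psi_n+q_n$ are bounded in $L^2$, velocity averaging gives strong $L^2$ compactness of the zeroth moments, hence of $D\psi_n$ after integration in $t$, and your a.e.-plus-dominated-convergence step (with the positivity remark keeping the exponentials in $(0,1]$) then closes the limit passage. In this sense your proof is a rigorous elaboration of the paper's, at the cost of invoking a velocity-averaging result the paper never mentions.

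One shared weak point should be named precisely: boundedness of the minimizing sequence. The paper flatly asserts that the admissible set is ``closed, convex, and bounded''; you rightly flag this as delicate, but your proposed remedy does not work as stated. The constraints $q\ge0$ and $\int_{S^2}q\,d\Omega\le U(x)$ with $U\in L^\infty(Z)$ bound only the angular $L^1$ norm of $q$: a sequence concentrating in angle, say $q_n=n\chi_{A_n}$ with $|A_n|=1/n$, has bounded angular integral but $\|q_n\|_{L^2(S^2)}=\sqrt{n}\to\infty$, and the penalty term coerces only $Dq$, not $\|q\|_2$. So neither your argument nor the paper's actually establishes weak precompactness of the minimizing sequence in $L^2$. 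A correct repair would be to add a pointwise bound $0\le q\le\bar q$ to the definition of $L^2_{ad}$ (making the set genuinely bounded, indeed weakly-* compact), or to reformulate the admissibility constraint in a topology in which the prescribed set is bounded; as written, this step is a genuine gap in both treatments, though one you at least identified.
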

\begin{proof}
We note that $J_{SF}$ is convex with respect to $q$ and that the set of admissible controls is closed, convex, and bounded.  This, coupled with the boundedness of the operator $\mathcal{E}$ allows us to conclude that the infimum of $J_{SF}$ over $ L^2_{ad}([0,T]\times Z\times S^2)$  is finite and that any weakly convergent minimizing sequence of controls $q_n$ will converge to an admissible control $q^*$.  By the linearity of ${\bf A},\Sigma,{\bf K},$ we conclude that the corresponding solutions converge strongly to a solution corresponding to the $q^*.$  (see Theorem 5.7 of \cite{Tro10}).  The convexity of $J_{SF}$ as a functional of $q$ allows us to conclude this control is optimal, as in Theorem 4.15 of \cite{Tro10}.
\end{proof}

We next note the following from \cite{DauLio93}:
\begin{proposition}
\label{prop:adjoint}
Under \ref{hyp:positivity}-\ref{hyp:absorb}, for any $r\in L^2([0,T]\times Z\times S^2),$
\begin{align*}
\bf{A}\lambda +\Sigma\lambda -{\bf K}\lambda&=r \\
\lambda&=0 \text{ on } \Gamma^+\\
\lambda(T,x,\Omega)&=0,~\forall (x,\Omega)\in Z\times S^2.
\end{align*}
has a unique solution $\lambda\in L^2([0,T]\times Z\times S^2),$ and the solution operator $\mathcal{E^*}$ is linear, bounded, and the adjoint to $\mathcal{E}.$
\end{proposition}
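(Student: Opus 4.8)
The plan is to establish the three assertions---existence and uniqueness, linearity and boundedness, and the adjoint identity---in turn, leaning on the transport theory of \cite{DauLio93} for the trace results that make the integrations by parts legitimate. For existence and uniqueness I would reduce the backward problem to a forward problem of the type already handled by the well-posedness result for \eqref{eq:transport}. Introduce the time- and angle-reversal map $(\mathcal{T}\lambda)(t,x,\Omega):=\lambda(T-t,x,-\Omega)$. A direct substitution shows that $\mathcal{T}$ intertwines the backward streaming operator $\mathbf{A}$ with the forward one $-\mathbf{A}$, while $\Sigma$ (multiplication by $\sigma_t(x)$) is untouched. The essential point is that $\mathbf{K}$ is preserved: because $s$ depends on $\Omega,\Omega'$ only through $\Omega\cdot\Omega'$, the substitution $\Omega'\mapsto-\Omega'$ in the scattering integral yields $(\mathbf{K}\mathcal{T}\lambda)(t,x,\Omega)=(\mathbf{K}\lambda)(T-t,x,-\Omega)$. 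Under $\mathcal{T}$ the terminal condition $\lambda(T,\cdot)=0$ becomes an initial condition at $t=0$ and the outgoing boundary $\Gamma^+$ is carried to $\Gamma^-$, so $\mathcal{T}\lambda$ solves a forward problem with datum $\mathcal{T}r$ subject to \eqref{eq:bdrycond}--\eqref{eq:initcond}; unwinding $\mathcal{T}$ produces the unique $\lambda$ (equivalently one quotes Ch. XXI, \S2, Thm. 3 of \cite{DauLio93} directly).

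For boundedness I would mirror the coercivity estimate of the preceding Proposition. The same integration by parts, now using $\lambda(T,\cdot)=0$ and $\lambda=0$ on $\Gamma^+$, gives $\langle \mathbf{A}\lambda,\lambda\rangle_2\ge 0$: the temporal term contributes $\tfrac12\int_{Z\times S^2}\lambda^2(0,\cdot)\,dx\,d\Omega\ge 0$, and the spatial term reduces to $-\tfrac12\int_{\Gamma^-}(\Omega\cdot n)\lambda^2\ge 0$ since $\Omega\cdot n<0$ there. Combined with $\langle\Sigma\lambda-\mathbf{K}\lambda,\lambda\rangle_2\ge\alpha\|\lambda\|_2^2$ (exactly as before), this yields $\langle \mathbf{A}\lambda+\Sigma\lambda-\mathbf{K}\lambda,\lambda\rangle_2\ge\alpha\|\lambda\|_2^2$, hence $\|\mathcal{E}^*r\|_2\le\alpha^{-1}\|r\|_2$. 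Linearity of $\mathcal{E}^*$ is immediate from the linearity of the problem.

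For the adjoint identity $\langle \mathcal{E}q,r\rangle_2=\langle q,\mathcal{E}^*r\rangle_2$, set $\psi=\mathcal{E}q$ and $\lambda=\mathcal{E}^*r$. Substituting $r=\mathbf{A}\lambda+\Sigma\lambda-\mathbf{K}\lambda$ and $q=-\mathbf{A}\psi+\Sigma\psi-\mathbf{K}\psi$, the claim reduces to three facts: $\Sigma$ is self-adjoint (multiplication by a real function); $\mathbf{K}$ is self-adjoint, again because $s(x,\Omega\cdot\Omega')$ is symmetric in $\Omega,\Omega'$, so Fubini together with relabeling the two angular slots swaps $\psi$ and $\lambda$; and $\langle \psi,\mathbf{A}\lambda\rangle_2+\langle \mathbf{A}\psi,\lambda\rangle_2=0$. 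This last is a Green's formula: integration by parts turns the left side into $-\int_{Z\times S^2}[\psi\lambda]_{t=0}^{t=T}\,dx\,d\Omega-\int_0^T\!\int_{\partial Z\times S^2}(\Omega\cdot n)\,\psi\lambda$, and every boundary contribution vanishes because $\psi(0,\cdot)=0$ and $\lambda(T,\cdot)=0$ annihilate the temporal term while $\lambda=0$ on $\Gamma^+$ and $\psi=0$ on $\Gamma^-$ annihilate the spatial term. Collecting these gives the identity, so $\mathcal{E}^*$ is the adjoint of $\mathcal{E}$.

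The main obstacle is that all three steps rest on the same delicate issue: elements of $D(\mathbf{A})$ are controlled only in the graph norm, so their traces on $\Gamma^\pm$ and at $t=0,T$ are not a priori well defined and the integrations by parts above are not automatically justified. Making the Green's formula rigorous requires the trace theory for the advection operator developed in Ch. XXI of \cite{DauLio93}, which is precisely why the statement is phrased as a consequence of that reference; once those trace results are in hand, the remaining manipulations are the routine computations sketched above.
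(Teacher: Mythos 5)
Your proposal is correct, and it is in fact more than the paper offers: the paper gives no argument at all for this proposition, simply recording it as a consequence of Ch.~XXI of \cite{DauLio93}, so your write-up is a genuine completion rather than a parallel route. Your three ingredients all check out. The reversal map $\mathcal{T}\lambda(t,x,\Omega)=\lambda(T-t,x,-\Omega)$ does intertwine the backward and forward problems: the angle flip is what makes the streaming term come out with the right sign, the invariance of $\mathbf{K}$ under $\Omega'\mapsto-\Omega'$ uses exactly the fact that the kernel depends only on $\Omega\cdot\Omega'$ (true for the Henyey--Greenstein kernel used here), and the sign bookkeeping sending $\Gamma^+$ to $\Gamma^-$ and $t=T$ to $t=0$ is right, so uniqueness and existence transfer from the forward theorem already quoted for \eqref{eq:transport}. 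Your coercivity estimate $\langle\mathbf{A}\lambda+\Sigma\lambda-\mathbf{K}\lambda,\lambda\rangle_2\geq\alpha\|\lambda\|_2^2$ mirrors the paper's own Proposition on boundedness of $\mathcal{E}$ with the roles of the temporal and spatial boundary terms swapped (the surviving terms are $\frac12\int\lambda^2(0,\cdot)$ and $-\frac12\int_{\Gamma^-}(\Omega\cdot n)\lambda^2$, both nonnegative), yielding $\|\mathcal{E}^*r\|_2\leq\alpha^{-1}\|r\|_2$. The Green's-formula verification of the adjoint identity is also sound, and you correctly identify the one place where the problem's specific setup matters: the spatial boundary term dies only because $\psi=0$ on $\Gamma^-$, which holds precisely because the paper fixes $q_1\equiv0$ in the definition of $\mathcal{E}$. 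Finally, your closing caveat is well placed: the traces of graph-norm elements of $D(\mathbf{A})$ on $\Gamma^\pm$ and at $t=0,T$ are not free, and the trace theory making your integrations by parts legitimate is exactly the content the paper is importing from \cite{DauLio93}; flagging this, rather than silently integrating by parts, is the right call.
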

Now, we are ready to turn to the optimality conditions for $\mathcal{P}_T,$ which is similar to that found in \cite{FraHerSch08}.

\begin{theorem}
\label{thm:trackopt}
For a desired distribution $\overline{D}\in L^2(Z)$ and under the assumptions \ref{hyp:positivity}-\ref{hyp:absorb}, $q^*\in  L^2_{ad}([0,T]\times Z\times S^2)$ is the unique optimal solution to $\mathcal{P}_T$ if and only if 
\begin{equation}
q^*(t,x,\Omega)=\proj_{[0,U(x)]}\Big(-\frac{1}{c_2(x)}\lambda^*(t,x,\Omega)\Big)
\end{equation}
where $\lambda^*=\mathcal{E}^*(c_1(D(\psi^*)-\overline{D})),$ $\psi^*=\mathcal{E}(q^*)$, and $\proj_S(\cdot)$ denotes projection onto the closed set $S.$
\end{theorem}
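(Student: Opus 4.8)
The plan is to exploit the convexity of $\mathcal{P}_T$ that was already used in the existence-and-uniqueness theorem. Writing $j_T(q):=J_T(\mathcal{E}(q),q)$ for the reduced functional, convexity of $j_T$ over the closed convex set $L^2_{ad}$ means that a first-order variational inequality is simultaneously necessary and sufficient for optimality (cf. Theorem 2.22 of \cite{Tro10}), so the whole ``if and only if'' collapses to a single equivalence: $q^*$ is optimal precisely when
\[
\langle j_T'(q^*),\, q-q^*\rangle_2 \geq 0 \quad \text{for all } q\in L^2_{ad}([0,T]\times Z\times S^2).
\]
Thus the two things I actually need are an explicit formula for the reduced gradient $j_T'(q^*)$ in terms of the adjoint state $\lambda^*$, and a reduction of this global inequality to the claimed pointwise projection.

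For the gradient, I would differentiate $j_T$ by the chain rule. Since $\mathcal{E}$ is linear and bounded, its Fréchet derivative in a direction $h$ is simply $\mathcal{E}(h)=:\psi_h$, so differentiating the tracking term produces $\int_Z 2c_1(D\psi^*-\overline{D})\,D\psi_h\,dx$, while the control term contributes $\langle c_2 q^*,h\rangle_2$. The essential manipulation is to push the dose operator $D$ and the state operator $\mathcal{E}$ onto the adjoint side: the adjoint $D^*$ of the dose functional lifts a function of $x$ to one constant in $(t,\Omega)$, and the adjoint relation $\langle r,\mathcal{E}(h)\rangle_2=\langle \mathcal{E}^*(r),h\rangle_2$ from Proposition \ref{prop:adjoint} then rewrites the tracking contribution as $\langle \lambda^*,h\rangle_2$ with $\lambda^*=\mathcal{E}^*\big(c_1(D\psi^*-\overline{D})\big)$, the source being understood as its constant-in-$(t,\Omega)$ lift so that it is a genuine element of $L^2([0,T]\times Z\times S^2)$ to which Proposition \ref{prop:adjoint} applies. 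This yields $j_T'(q^*)=\lambda^*+c_2 q^*$, with $\lambda^*$ exactly the adjoint state delivered by Proposition \ref{prop:adjoint}.

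Substituting into the variational inequality gives $\langle \lambda^*+c_2q^*,\,q-q^*\rangle_2\geq 0$ for all admissible $q$. I would then invoke the standard projection characterization of solutions of variational inequalities over closed convex sets (Lemma 2.26 of \cite{Tro10}): the inequality is equivalent to $q^*=\proj_{L^2_{ad}}\big(q^*-\tfrac{1}{c_2}(\lambda^*+c_2 q^*)\big)=\proj_{L^2_{ad}}\big(-\tfrac{1}{c_2}\lambda^*\big)$, where the positive factor $1/c_2$ is chosen precisely to cancel the $q^*$ term. Reducing the set-projection to the pointwise map $\proj_{[0,U(x)]}$ requires localizing the inequality in $(t,x,\Omega)$ and using that the admissibility constraint decouples into an (essentially) boxwise form, so that the metric projection acts pointwise. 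Uniqueness of $q^*$ is then inherited directly from the earlier existence-and-uniqueness result for $\mathcal{P}_T$.

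The main obstacle I expect is twofold. First is the bookkeeping in the gradient step: correctly identifying the adjoint of the composite $D\circ\mathcal{E}$ and verifying that the source $c_1(D\psi^*-\overline{D})$, a function of $x$ alone, legitimately enters Proposition \ref{prop:adjoint} as an $L^2([0,T]\times Z\times S^2)$ right-hand side. Second, and more delicate, is the localization that turns the global $L^2$ variational inequality into the stated pointwise projection onto $[0,U(x)]$; this is where the precise form of the admissibility set (the integral bound $\int_{S^2}q\,d\Omega\le U(x)$ versus a pointwise box) has to be reconciled with the claimed formula, and it is the step whose routine-looking details carry the real content of the theorem.
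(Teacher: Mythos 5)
You take essentially the same route as the paper: the paper's entire proof consists of stating the reduced gradient and citing Theorem 2.25, Lemma 2.26, and Theorem 2.27 of \cite{Tro10}, i.e., exactly the convexity--variational-inequality--projection chain you lay out, so your version is if anything more complete (it makes explicit the lift $D^*$, the application of Proposition \ref{prop:adjoint} to the function $c_1(D\psi^*-\overline{D})$ of $x$ alone, the cancellation producing $-\lambda^*/c_2$, and the fact that uniqueness is inherited from the earlier existence theorem).

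The difficulty you flag in your last paragraph is genuine, and the paper does not resolve it either: Lemma 2.26 of \cite{Tro10} yields a pointwise projection only for box constraints, whereas $L^2_{ad}$ is defined by $q\ge 0$ together with the integral bound $\int_{S^2}q\,d\Omega\le U(x)$, which couples the values of $q$ across directions at each $(t,x)$. Under that constraint the first-order condition is a projection onto the set $\{q\ge 0:\int_{S^2}q\,d\Omega\le U(x)\}$ in the $\Omega$-variable (equivalently, involves a multiplier for the integral constraint), not $\proj_{[0,U(x)]}$ applied pointwise; the theorem as stated is valid only under the pointwise box reading $0\le q(t,x,\Omega)\le U(x)$, which is evidently what the authors tacitly use. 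A related internal inconsistency that your computation silently corrects: the penalty in (\ref{eq:tracking}) is $\int_Z c_2(Dq)^2\,dx$, whose derivative is $2c_2D^*Dq$ (constant in $(t,\Omega)$) and would lead to a bang--bang rather than a projection characterization; the formula $q^*=\proj_{[0,U(x)]}\big(-\tfrac{1}{c_2}\lambda^*\big)$ is consistent only with a pointwise penalty such as $\tfrac{1}{2}\int c_2 q^2$, which is what you in effect differentiated. So your proposal matches the paper's argument and its conclusion, and the residual gaps you identify are defects of the paper's formulation rather than of your proof.
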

\begin{proof}
The gradient of $j_T$ is $c_1\mathcal{E}^*(D\mathcal{E}(q^*)-\overline{D}))+c_2Dq$ and so we conclude, using the same arguments of Theorem 2.25, Lemma 2.26, and Theorem 2.27 of \cite{Tro10}. 
\end{proof}
This means that the optimality system for $\mathcal{P}_T$ is
\begin{equation}
\label{eq:trackoptsys}
\begin{cases}
-{\bf A} \psi^*+\Sigma \psi^*&={\bf K}\psi^*+q^*,\\
{\bf A} \lambda^*+\Sigma \lambda^*&={\bf K}\lambda^*+c_1D(\mathcal{E}(q^*)-\overline{D}),\\
q^*&=\proj_{[0,U(x)]}(-\frac{1}{c_2}\lambda^*)
\end{cases}
\end{equation}
with $\psi^*$ satisfying (\ref{eq:bdrycond}) and (\ref{eq:initcond}),  $\lambda^*=0$ on $\Gamma^+,$ and $\lambda(T,x,\Omega)=0$ on $Z\times S^2.$

The necessary condition for the optimality of a source for $\mathcal{P}_{SF}$ is of a similar form. 

\begin{theorem}
\label{thm:sfopt}
For a given distribution of cells $\rho_i,~i=0,1,\dots N$ and the associated linear-quadratic cell death parameters $\alpha_i,\beta_i>0,$ a locally optimal $q^*\in  L^2_{ad}([0,T]\times Z\times S^2)$ for $J_{SF}$ must be such that
\begin{equation}
\label{eq:sfproj}
q^*(t,x,\Omega)=\proj_{[0,U(x)]}\Big(-\frac{1}{c_2(x)}\lambda^*(t,x,\Omega)\Big)
\end{equation}
where $\lambda^*=\mathcal{E}^*(r^*)$ for
\begin{align*}
r^*=&\Big[\big(-\alpha_0-2\beta_0D\psi^*\big)\big(c_0\rho_0\exp[-\alpha_0 D\psi^*-\beta_0(D\psi^*)^2]\big)\Big]\\
&-\sum_{i=1}^N\Big[\big(-\alpha_i-2\beta_i D\psi^*\big)\big(c_i\rho_i\exp[-\alpha_i D\psi^*-\beta_i(D\psi^*)^2]\big)\Big]
\end{align*}
and $\psi^*=\mathcal{E}(q^*).$
\end{theorem}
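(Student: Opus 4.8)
The plan is to reproduce the variational-inequality argument of Theorem~\ref{thm:trackopt}, the essential new feature being that the reduced functional $j_{SF}(q):=J_{SF}(\mathcal{E}(q),q)$ is now nonlinear---indeed nonconvex---in the state. Consequently a local minimizer $q^*$ is only guaranteed to satisfy the first-order necessary condition
\[
\langle j_{SF}'(q^*),\,q-q^*\rangle_2\ge 0\qquad\text{for all }q\in L^2_{ad}([0,T]\times Z\times S^2),
\]
and the whole task reduces to computing $j_{SF}'(q^*)$ and then rewriting this inequality pointwise exactly as in the tracking case.

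First I would verify that $j_{SF}$ is Fr\'echet differentiable. Writing $\psi=\mathcal{E}(q)$ and $u=D\psi\in L^2(Z)$, the map $q\mapsto u$ is bounded and linear, so differentiability of $j_{SF}$ reduces to differentiability of the superposition (Nemytskii) operators $u\mapsto\exp[-\alpha_i u-\beta_i u^2]$ on $L^2(Z)$. The observation that makes this work despite the exponential is that, since every $\beta_i>0$, both $\exp[-\alpha_i u-\beta_i u^2]$ and its derivative $(-\alpha_i-2\beta_i u)\exp[-\alpha_i u-\beta_i u^2]$ are \emph{uniformly bounded} over $u\in\R$. Hence these operators carry $L^2(Z)$ continuously into $L^\infty(Z)\subset L^2(Z)$ with bounded and continuous derivatives, and Fr\'echet differentiability of $j_{SF}$ follows from the chain rule together with the boundedness of $\mathcal{E}$, $\mathcal{E}^*$, and $D$.

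Next I would compute the gradient. Differentiating the state-dependent terms of $J_{SF}$ with respect to $\psi$ in a direction $\delta\psi$ and using $\tfrac{d}{du}\exp[-\alpha_i u-\beta_i u^2]=(-\alpha_i-2\beta_i u)\exp[-\alpha_i u-\beta_i u^2]$ yields precisely the integrand $r^*$ from the statement tested against $D(\delta\psi)$; transferring $D$ and then $\mathcal{E}$ onto their adjoints (Proposition~\ref{prop:adjoint}) recasts this as $\langle\mathcal{E}^*(r^*),\delta q\rangle_2$ with $\lambda^*=\mathcal{E}^*(r^*)$ (the source $r^*$ being understood as constant in $t$ and $\Omega$) and $\psi^*=\mathcal{E}(q^*)$. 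Adding the derivative of the control penalty $\tfrac{c_2}{2}\int_Z(Dq)^2$ reproduces the same control-cost contribution as in Theorem~\ref{thm:trackopt}, so $j_{SF}'(q^*)=\lambda^*+c_2Dq^*$.

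Finally I would convert the variational inequality into the stated projection. This step is identical to the tracking case: the inequality is localized to an almost-everywhere pointwise inequality (Lemma 2.26 of \cite{Tro10}) and solved for $q^*$ on the admissible interval, producing $q^*=\proj_{[0,U(x)]}(-\tfrac{1}{c_2}\lambda^*)$ via Theorems 2.25 and 2.27 of \cite{Tro10}. The main obstacle is the differentiability step: a priori the exponential of an $L^2$ function need not even lie in $L^2$, so one must rule out any loss of smoothness in the reduced functional. The resolution is exactly the uniform-boundedness remark above, available precisely because each $\beta_i$ is strictly positive; with that in hand the remaining computations are routine applications of the results already invoked for $\mathcal{P}_T$.
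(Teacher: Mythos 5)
Your overall route is the same as the paper's: reduce to the variational inequality $j_{SF}'(q^*)(q-q^*)\ge 0$ for a local minimizer, compute $j_{SF}'$ by the chain rule to obtain exactly the source $r^*$, pass to the adjoint $\lambda^*=\mathcal{E}^*(r^*)$ via Proposition \ref{prop:adjoint}, and conclude the projection formula by the standard pointwise argument from \cite{Tro10} (the paper phrases this as the minimum principle of pp.\ 68--70, you as Lemma 2.26 with Theorems 2.25 and 2.27 --- equivalent packagings). The only structural difference is cosmetic: the paper first treats $\rho_0\equiv 1$ and then remarks that the general case is identical, while you handle all species at once. The paper also shares your looseness about the control-penalty term (it writes $c_2\int_Z q^*(q-q^*)$ where strictly the derivative of $\tfrac{c_2}{2}\int_Z(Dq)^2$ is $D^*(c_2\,Dq^*)$), so no discrepancy arises there.

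However, the one substantive item you add beyond the paper --- the justification of Fr\'echet differentiability, which the paper merely asserts ``due to the differentiability of the integrands'' --- contains a genuine technical error as stated. The superposition operator $u\mapsto\exp[-\alpha_i u-\beta_i u^2]$ is \emph{bounded} from $L^2(Z)$ into $L^\infty(Z)$ but is not \emph{continuous} into $L^\infty(Z)$: taking $u_n=n\chi_{A_n}$ with $|A_n|=n^{-4}$ gives $\|u_n\|_{L^2}=1/n\to 0$ while $\|\exp[-\alpha_i u_n-\beta_i u_n^2]-1\|_{L^\infty}\to 1$. (Indeed, by the classical rigidity theorem for Nemytskii operators, Fr\'echet differentiability of $u\mapsto g(u)$ from $L^2$ into $L^2$ would force $g$ to be affine.) So the chain rule cannot be applied through $L^\infty$ as you set it up. The repair uses your own key observation, strict positivity of $\beta_i$, but one derivative higher and applied to the scalar functional rather than the superposition operator: with $g_i(u)=\exp[-\alpha_i u-\beta_i u^2]$, both $g_i'$ and $g_i''$ are uniformly bounded on $\R$, and Taylor's theorem gives
\[
\Big|\int_Z\rho_i\big[g_i(u+h)-g_i(u)-g_i'(u)h\big]dx\Big|\le\tfrac{1}{2}\|g_i''\|_\infty\|h\|^2_{L^2(Z)}=o(\|h\|_{L^2(Z)}),
\]
which is precisely Fr\'echet differentiability of $u\mapsto\int_Z\rho_i g_i(u)\,dx$ on $L^2(Z)$ with derivative $h\mapsto\int_Z\rho_i g_i'(u)h\,dx$. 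Once this is in place, composing with the bounded linear maps $D$ and $\mathcal{E}$ is legitimate, and the remainder of your argument coincides with the paper's proof and is correct.
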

\begin{proof}
The reduced cost functional $j_{SF}(q)=J_{SF}(\mathcal{E}(q),q)$ is Fr\'echet differentiable due to the differentiability of the integrands in $J_{SF}$ and the linearity and continuity of $\mathcal{E}.$  For simplicity, we assume $\rho_0\equiv1$ (and thus $\rho_i\equiv0$ for $i\geq1$) first.  We then get that for a locally optimal $q^*$ and any other admissible control $q,$
\begin{equation*}
j_{SF}'(q^*)(q-q^*)\geq0
\end{equation*}
 where, by the linearity of the adjoint solution operator $\mathcal{E}^*$ and the chain rule,
\begin{align*}
j_{SF}'(q^*)(q-q^*)&= c_0\int_Z\big(-\alpha_0-2\beta_0D\mathcal{E}(q^*)\big)\big(c_0\rho_0\exp[-\alpha_0 D\mathcal{E}(q^*)-\beta_0(D\mathcal{E}(q^*))^2]\big)\mathcal{E}(q-q^*)\\
&+c_2\int_Z q^*(q-q^*)\\
&=\int_Z\mathcal{E}^*((-\alpha_0-2\beta_0D\mathcal{E}(q^*)\big)\big(c_0\rho_0\exp[-\alpha_0 D\mathcal{E}(q^*)-\beta_0(D\mathcal{E}(q^*))^2]\big)(q-q^*)\\
&+c_2\int_Zq^*(q-q^*).
\end{align*}
This leads to the minimum principle (by an argument analogous to that found on pages 68-70 of  \cite{Tro10}), which states that
\begin{equation*}
\min_{q\in L^2_{ad}([0,T]\times Z\times S^2)}[(\lambda^*+c_2q^*)q]
\end{equation*}
is attained almost everywhere at $q^*$ which leads us to conclude that $q^*$ satisfies (\ref{eq:sfproj}). Now if we drop the assumption that $\rho_i\equiv1,$ and allow for the presence of other species, the argument follows through in a nearly identical fashion.
\end{proof}
This gives the necessary optimality system
\begin{equation}
\label{eq:sfoptsys}
\begin{cases}
-{\bf A} \psi^*+\Sigma \psi^*&={\bf K}\psi^*+q^*,\\
{\bf A} \lambda^*+\Sigma \lambda^*&={\bf K}\lambda^*+r^*,\\
q^*&=\proj_{[0,U(x)]}(-\frac{1}{c_2}\lambda^*)
\end{cases}
\end{equation}
where $r^*$ is as in Theorem \ref{thm:sfopt}.

\section{Minimum entropy closure}
\label{sec:M1}
For our purposes, solving (\ref{eq:transport}) in full is unnecessary.  We note that both $J_T$ and $J_{SF}$ do not need the exact photon densities as functions of the angular variable; it would be enough to determine the zeroth moment, which is the total energy at a given $(t,x)$:
\begin{equation*}
\psi^{(0)}(t,x)=\int_{S^2}\psi(t,x,\Omega)d\Omega.
\end{equation*}
In light of this, we average (\ref{eq:transport}) over the angular variable and obtain
\begin{equation}
\label{eq:zeromomeqn}
\partial_t\psi^{(0)}(t,x)+\nabla_x\cdot\psi^{(1)}(t,x)+\sigma_t(x)\psi^{(0)}(t,x)=\sigma_s(x)\psi^{(0)}(t,x)+q^{(0)}(t,x)\end{equation}
where 
\begin{equation*}
\psi^{(1)}(t,x)=\int_{S^2}\Omega\psi(t,x,\Omega)d\Omega
\end{equation*}
is the first moment, or the flux vector and $q^{(0)}$ is the average of $q$ over all directions.  By multiplying (\ref{eq:transport}) by $\Omega$ and taking the average over all directions, we get
\begin{equation}
\label{eq:firstmomeqn}
\partial_t\psi^{(1)}(t,x)+\nabla_x\cdot\psi^{(2)}(t,x)+\sigma_t(x)\psi^{(1)}(t,x)=\sigma_s(x)g\psi^{(1)}(t,x)+q^{(1)}(t,x)
\end{equation}
where
\begin{equation*}
\psi^{(2)}(t,x)=\int_{S^2}(\Omega\otimes\Omega)\psi(t,x,\Omega)d\Omega
\end{equation*}
is the second moment, or the pressure tensor of the radiation field, and $q^{(1)}$ is the first moment of the control.  We note that if we continued this process of multiplying (\ref{eq:transport}) by monomials of $\Omega$ and integrating over $S^2,$ we would always have an equation relating the $n^{th}$ moment to the $(n+1)^{th}$ moment.  If we want a closed system, then, we must select an approximation to the $(n+1)^{th}$ moment
\begin{equation*}
\psi^{(n+1)}=D(\psi^{(0)},\dots,\psi^{(n)}).
\end{equation*}
Here, we consider the minimum entropy closure ($M_1$) to approximate $\psi^{(2)}.$ This involves searching for $I$ which minimizes
\begin{equation*}
H^*_R(I)=\int_{S^2}h^*_R(I)d\Omega
\end{equation*}
where
\begin{equation*}
h^*_R(I)=2k\nu^2(n\log n-(n+1)\log(n+1)),\text{ with } n=\frac{I}{2h\nu^3},
\end{equation*}
and $k$ is the Boltzmann constant, $h$ is the Planck constant, and $\nu$ is the frequency of the radiation.  We constrain the entropy minimization by requiring, naturally, that
\begin{equation*}
\int_{S^2} I d\Omega=\psi^{(0)},\text{ and } \int_{S^2} \Omega I d\Omega=\psi^{(1)}.
\end{equation*}
in \cite{DubFeu99}, the entropy minimizer was obtained explicitly by setting 
\begin{equation*}
\psi^{(2)}=D(f)E
\end{equation*}
where the relative flux is $f=\psi^{(1)}/\psi^{(0)},$ the Eddington tensor $D$, is 
\begin{equation*}
D(f)=\frac{1-\chi(f)}{2}I+\frac{3\chi(f)-1}{2}\frac{f\otimes f}{|f|^2}
\end{equation*}
and the Eddington factor $\chi$ is 
\begin{equation*}
\chi(f)=\frac{1}{3}(5-2\sqrt{4-3|f|^2}).
\end{equation*}
This choice of the Eddington factor preserves several reasonable physical properties.  Importantly, the flux is limited; that is, $|f|<1.$ This preserves the finite bound the speed that information travels through the system.  Also, the distribution is nonnegative, which we would like to ensure.  We note that neither property is satisfied for several common closure choices which use a diffusion approximation, such as the spherical harmonics method(also known as $P_N$ methods) \cite{Bru02}.  In \cite{DucDubFra10}, the $M_1$ approximation was seen via numerical tests to provide a high quality approximation when compared with Monte Carlo-based methods.  

Despite these advantages, the $M_1$ does have several drawbacks.  Due to the nonlinearity of the $M_1$ approximation (as opposed to the $P_N$ approximation \cite{FraHerSch08}), we can not expect that the systems obtained by applying the $M_1$ approximation to the optimality systems will match the optimality systems obtained by optimizing the $M_1$ approximation to (\ref{eq:transport}).  Also, as mentioned briefly above, the use of moment models complicates the incorporation of boundary conditions.  The boundary condition (\ref{eq:bdrycond}) describes only the incoming particles, whereas boundary conditions for (\ref{eq:zeromomeqn}) and (\ref{eq:firstmomeqn}) require us to prescribe values for the full moments.  In light of this, we use $U(x)$ to approximate boundary control by setting it as
\begin{equation*}
U(x)=\begin{cases}
q_{max},& \text{if } d_{\partial Z}(x)\leq \epsilon\\
\delta& \text{otherwise},
\end{cases}
\end{equation*}
where $q_{max}>0$ is a total maximum source level, $\epsilon,\delta>0$ are sufficiently small parameters, and $d_{\partial Z}$ denotes the usual distance from the boundary of $Z.$ We note that the first two equations in both (\ref{eq:trackoptsys}) and (\ref{eq:sfoptsys}) are very similar and we thus use the $M_1$ closure to solve the state and adjoint equations.  That is, we take an optimize-then-discretize approach to $\mathcal{P}_T$ and $\mathcal{P}_{SF}$ as opposed to a discretize-then-optimize approach.

\section{2-D Numerical Tests}
\label{sec:numtest}
In this section, we study the 2-dimensional region $Z:=[-1,1]\times[-1,1].$ We define a void-like region $Z_{V}:=\{0.8\leq|x|\leq0.9\}$ On $Z_V,$ the scattering  and absorbtion cross-sections $\sigma_s,\sigma_a$ have values that differ from those elsewhere; these are summarized in Table \ref{sigmatable} for the specific values.

\begin{table}
\centering
\begin{tabular}{| c | c | c |} 
\hline
 &  $Z_V$ & $Z\setminus Z_V$\\ \hline
 $\sigma_a$ & 0.001 & 0.05 \\ \hline
 $\sigma_s$ & 0.01 & 0.5 \\
\hline
\end{tabular} 
\caption{Material parameters for $\mathcal{P}_{SF}$}
\label{sigmatable}
\end{table}
\begin{table}
\centering
\begin{tabular}{| c | c | c | c |} 
\hline
 & $Z_T$ & $Z_R$\ & $Z_N$\\ \hline
 $\alpha$ & 0.52 & 0.170& 0.170 \\ \hline
 $\beta$ & 0.171 & 0.0078& 0.0078 \\
\hline
\end{tabular} 
\caption{Cell-response parameters for $\mathcal{P}_{SF}$}
\label{lqtable}
\end{table}
The final parameter of the physical medium to be defined is the scattering phase function.  For simplicity in our test problem, we define $g\equiv 0.85.$  Other, more involved, kernels might be used in this range for $g$ to agree with Mie scattering theory as in \cite{AydOliGod02}.

For testing, we use three tumor/risk region regions similar to those in \cite{SheFerOli99}.  Specifically, we define the regions in Table \ref{tumortable} and are shown in Figure \ref{fig:cases}; the void region is shown in black and the tumor and risk regions are traced in white.  In the basic target case, seen in Figure \ref{fig:basiccase}, the tumor region is a box, as is the risk region.  The second, intermediate target case, seen in Figure \ref{fig:intercase}, involves an L-shaped tumor around a box-shaped risk region.  Finally, the complex target case in Figure \ref{fig:complexcase} involves a C-shaped tumor around a risk region.
\begin{table}
\centering
\begin{tabular}{| c | c | c |} 
\hline
  &  $Z_T$ & $Z_R$ \\ \hline
Basic Target & $[-0.25,0.25]\times[-0.25,0.25]$ & $[.254, 0.379]\times[-0.125,0.125]$ \\ \hline 
\multirow{2}{*}{Intermediate Target} & $([-0.25,0.25]\times[-0.25,0])$ & $[0.04,0.25]\times[0.04,0.25]$\\ 

 &$\cup([-0.25,0]\times[0,0.25])$ &  \\ \hline
\multirow{3}{*}{Complex Target}  & $([-0.25,0.25]\times[-0.25,-0.125])$ & $[-0.25,-0.04]\times[-0.121,0.121]$\\
 & $\cup([-0.25,0.25]\times[0.125,0.25])$ & \\
 & $\cup([0.04,0.25]\times[-0.125,0.125])$ &  \\
\hline
\end{tabular} 
\caption{Locations of Tumor and Risk Region}
\label{tumortable}
\end{table}
  We will solve both $\mathcal{P}_T$ and $\mathcal{P}_{SF}$ for each geometry seen in Figure \ref{fig:cases}.  For $\mathcal{P}_T$ for each example, we set $\overline{D}\equiv T$ on $Z_T$ and $0$ elsewhere, corresponding to an average (over time) dose of $1,$ and 
\begin{equation*}
c_1=c_T\chi_{Z_T}+c_R\chi_{Z_R}+c_N\chi_{Z_N}
\end{equation*}
 with $c_T=25,~c_R=150,~c_N=1.$ For $\mathcal{P}_{SF},$ we set $\rho_0\equiv\chi_{Z_T},~\rho_1\equiv\chi_{Z_R},~\rho_2\equiv\chi_{Z_N},$ with weights $a_0=500,~a_1=2000,~a_2=1.$  The LQ parameters are shown below; the tumor region uses parameters for Lewis Lung tumor cells and the risk region and normal region use parameters for V79 (normal Chinese hamster lung tissue); both sets of parameters are taken from \cite{SteDowPea86}.
\begin{figure}
\centering
\subfigure[Basic Target]{
\begin{tikzpicture}[scale=2.5]
\fill[gray!75] (-1,-1)--(-1,1)--(1,1)--(1,-1)--(-1,-1);
\fill[black, even odd rule] (0,0) circle (.8) (0,0) circle (.9);
\draw[white,line width=0.2pt] (-0.25,-0.25)--(-0.25,0.25)--(0.25,0.25)--(0.25,-0.25)--cycle;
\draw[white,line width=0.2pt] (.29,-0.125)--(0.29,0.125)--(0.379,0.125)--(0.379,-0.125)--cycle;
\end{tikzpicture}
\label{fig:basiccase}
}
\subfigure[Intermediate Target]{
\begin{tikzpicture}[scale=2.5]
\fill[gray!75] (-1,-1)--(-1,1)--(1,1)--(1,-1)--(-1,-1);
\fill[black, even odd rule] (0,0) circle (.8) (0,0) circle (.9);
\draw[white,line width=0.2pt] (-0.25,-0.25)--(0.25,-0.25)--(0.25,0)--(0,0)--(0,0.25)--(-0.25,0.25)--cycle;
\draw[white,line width=0.2pt] (0.06,0.06)--(0.25,0.06)--(0.25,0.25)--(0.06,0.25)--cycle;
\end{tikzpicture}
\label{fig:intercase}
}
\subfigure[Complex Target]{
\begin{tikzpicture}[scale=2.5]
\fill[gray!75] (-1,-1)--(-1,1)--(1,1)--(1,-1)--(-1,-1);
\fill[black, even odd rule] (0,0) circle (.8) (0,0) circle (.9);
\draw[white,line width=0.2pt] (-0.25,-0.25)--(0.25,-0.25)--(0.25,0.25)--(-0.25,0.25)--(-0.25,0.125)--(0.04,0.125)--(0.04,-0.125)--(-0.25,-0.125)--cycle;
\draw[white,line width=0.2pt] (-0.25,-0.09)--(-0.04,-0.09)--(-0.04,0.09)--(-0.25,0.09)--cycle;
\end{tikzpicture}

\label{fig:complexcase}
}
\caption{Example cases}
\label{fig:cases}
\end{figure}
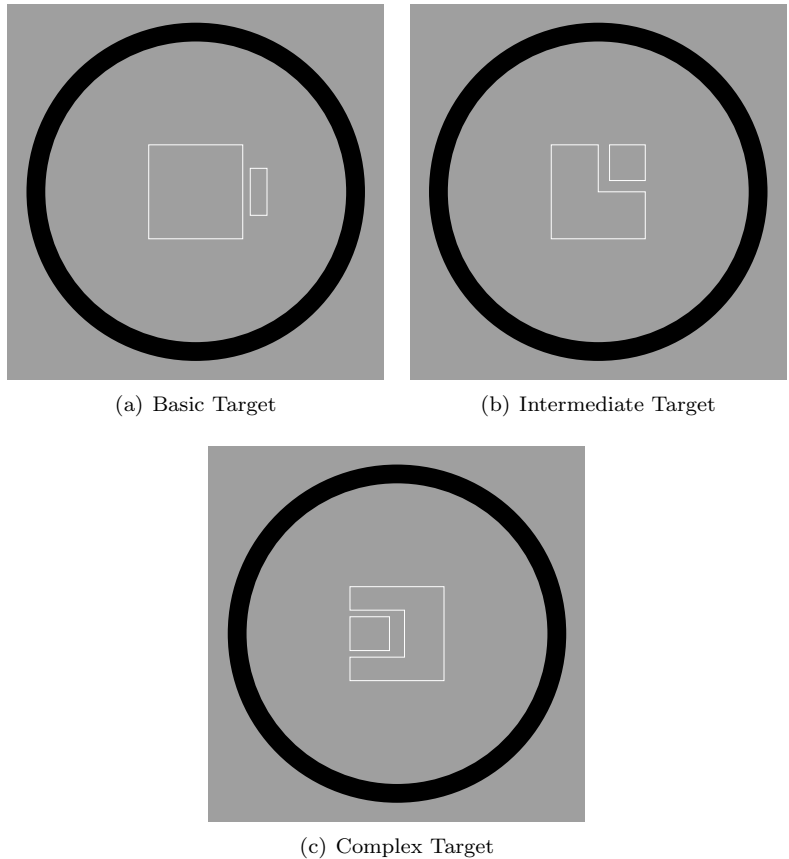
\setcounter{subfigure}{0}
We use a simple gradient descent algorithm to determine an optimal control for each of the three targets and for each objective functional.  The $M_1$ approximations to both the state and adjoint equations are solved with a finite volume solver using approximately 10,000 cells with final time $T=5.$ The parameters $\delta$ and $\epsilon$ in the definition of $U(x)$ are chosen as $\min\{\Delta x,\Delta y\}(10^{-4})t$ and $\min\{\Delta x,\Delta y\},$ respectively, where $\Delta x,\Delta y$ are the mesh sizes.  We consider the algorithm as converged if both the difference between consecutive iterates and $||j'(q)||_{\infty}$ are smaller than $10^{-4}.$  

\begin{figure}
\centering
\subfigure[Tracking, Basic Target]{
\label{fig:trackbasic}
\includegraphics [height=5cm] {./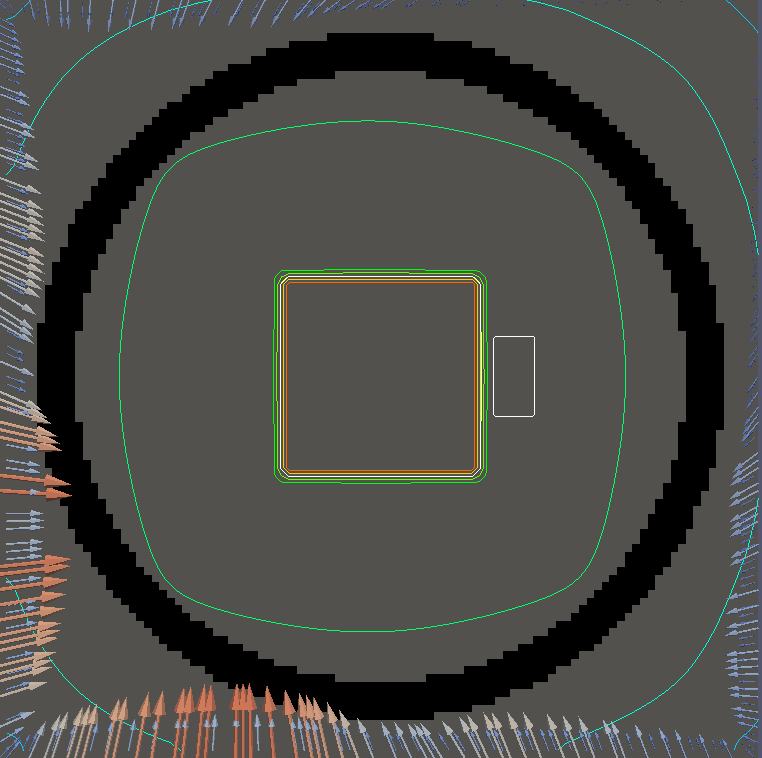}
}
\subfigure[Surviving Fraction, Basic Target]{
\label{fig:sfbasic}
\includegraphics [height=5cm] {./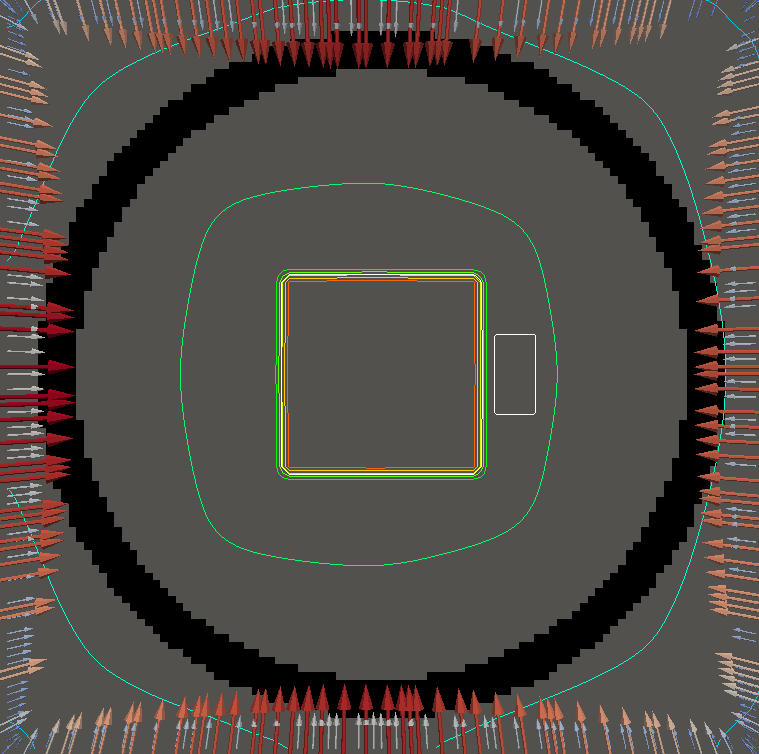}
}\\
\subfigure[Tracking, Intermediate Target]{
\label{fig:trackinter}
\includegraphics [height=5cm] {./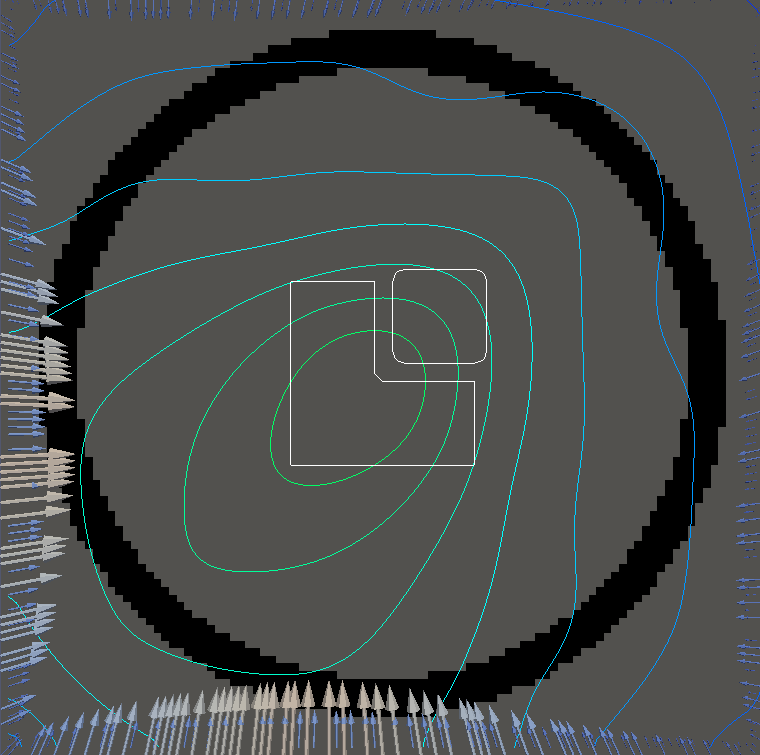}
}
\subfigure[Surviving Fraction, Intermediate Target]{
\label{fig:sfinter}
\includegraphics [height=5cm] {./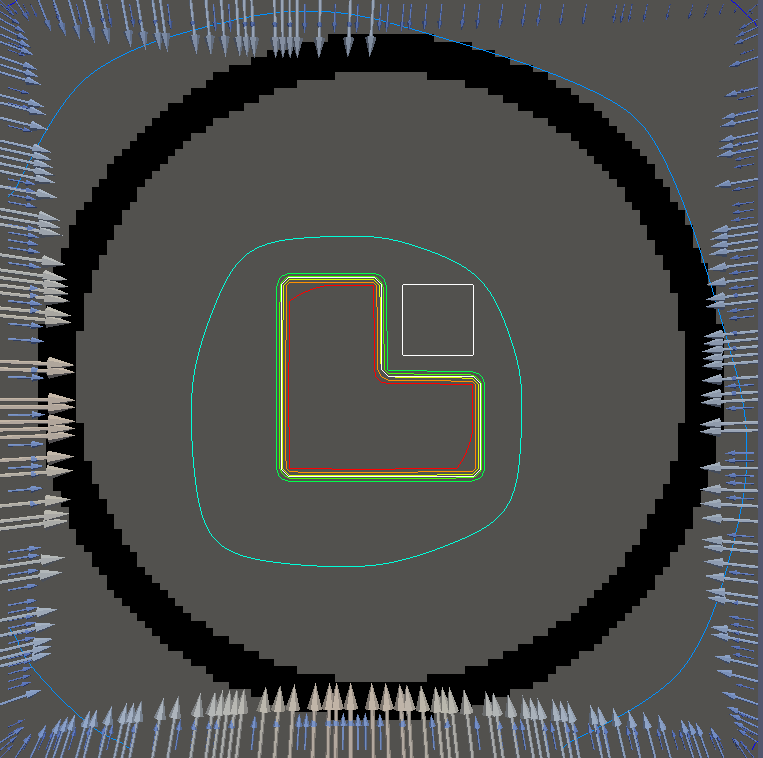}
}\\
\subfigure[Tracking, Complex Target]{
\label{fig:trackcomplex}
\includegraphics[height=5cm]  {./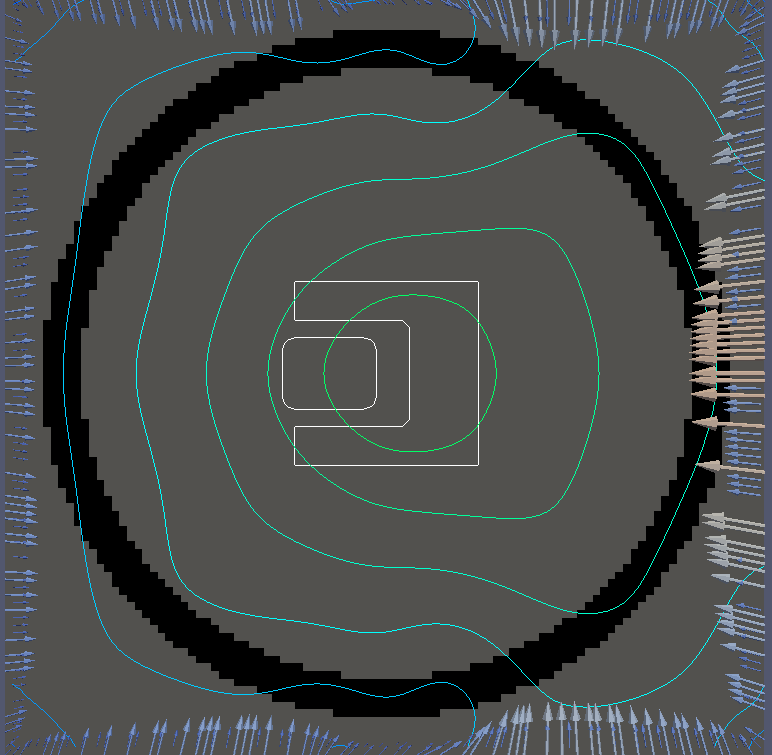}
}
\subfigure[Surviving Fraction, Complex Target]{
\label{fig:sfcomplex}
\includegraphics [height=5cm] {./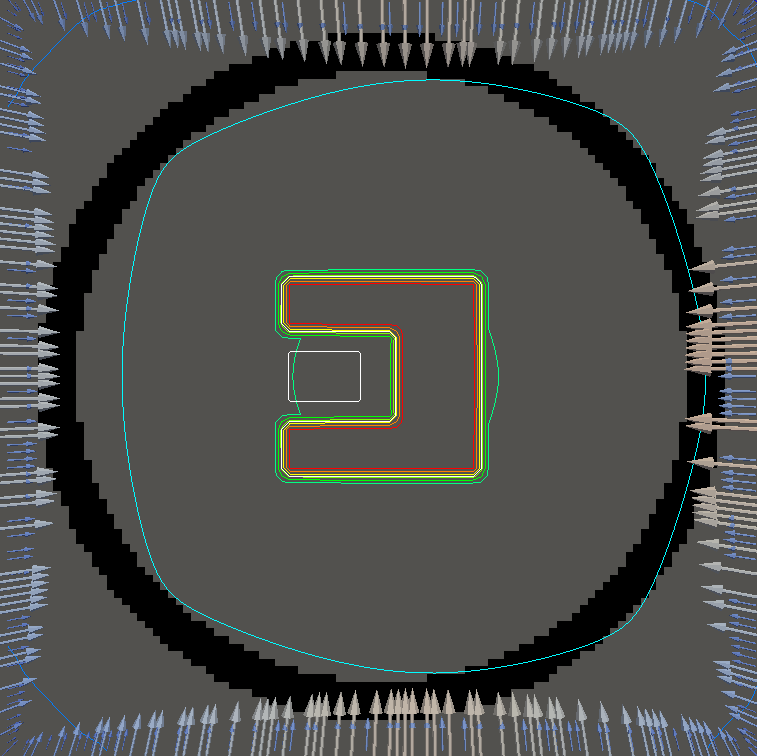}
}
\caption{Optimal boundary source for $\mathcal{P}_T,$ in \subref{fig:trackbasic},\subref{fig:trackinter},\subref{fig:trackcomplex} and $\mathcal{P}_{SF}$ in \subref{fig:sfbasic},\subref{fig:sfinter},\subref{fig:sfcomplex}}
\label{fig:optdose}
\end{figure}
\setcounter{subfigure}{0}

Figure \ref{fig:optdose} shows the optimal boundary source term for both $\mathcal{P}_T$ and $\mathcal{P}_{SF}.$  The vectors shown on the boundary are the time-integrated values of $q^{(1)}$ normalized and then scaled by $q^{(0)}.$  In Figures \ref{fig:trackbasic}, \ref{fig:trackinter}, and \ref{fig:trackcomplex} (corresponding to $\mathcal{P}_T$), the isolines are spaced at $5\%$ intervals of the maximum of the desired dose (here, 5).  In the intermediate and tracking cases, we see that relatively low dose levels are attained, primarily due to the high penalty to any dose deposited in the risk region.  In Figures \ref{fig:sfbasic}, \ref{fig:sfinter}, and \ref{fig:sfcomplex}(corresponding to $\mathcal{P}_{SF}$), the isolines are spaced at intervals of $10\%$ of cells killed.  Here a high proportion of the tumor cells are killed (in each case $80\%-90\%$) while in the Intermediate and Basic cases, the tumor has at least $60\%$ survival; in the Complex case, the risk region has $50\%-60\%$ survival.  

The dose deposited in $\mathcal{P}_T$ changes significantly when we alter the relative weights $c_T$ and $c_R$. In Figure \ref{fig:optlowrisk}, we see the results for solving $\mathcal{P}_T,$ and $\mathcal{P}_{SF}$ where we set $c_R=50$ and $a_1=1000$ (all other parameters are unchanged).  In both the basic and intermediate cases, the dose delivered to the tumor is significantly higher while also remaining largely concentrated on the tumor.  This is slightly less true in the complex case.  In the figures for $\mathcal{P}_{SF},$ there is no general change in the pattern of cell death, however the risk region in the first two cases has $50\%$ cell survival whereas the complex case has approximately $40\%-50\%$ cell survival.  However, this lack of change in pattern can partially be attributed to the tumor cells being more susceptible to the radiation dose.  

\begin{figure}
\centering
\subfigure[Tracking, Basic Target]{
\label{fig:trackbasiclow}
\includegraphics[height=5cm] {./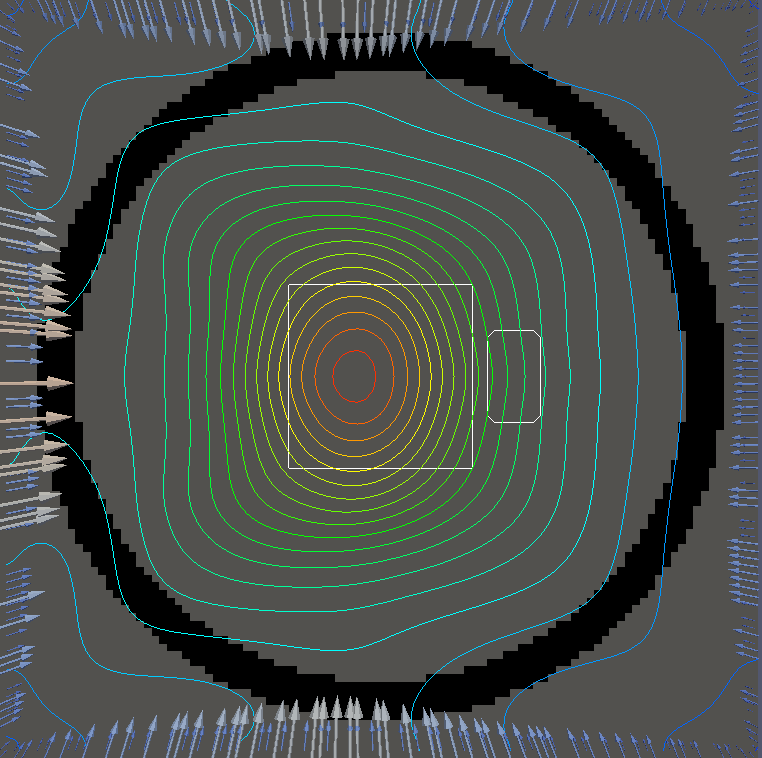}
}
\subfigure[Surviving Fraction, Basic Target]{
\label{fig:sfbasiclow}
\includegraphics[height=5cm] {./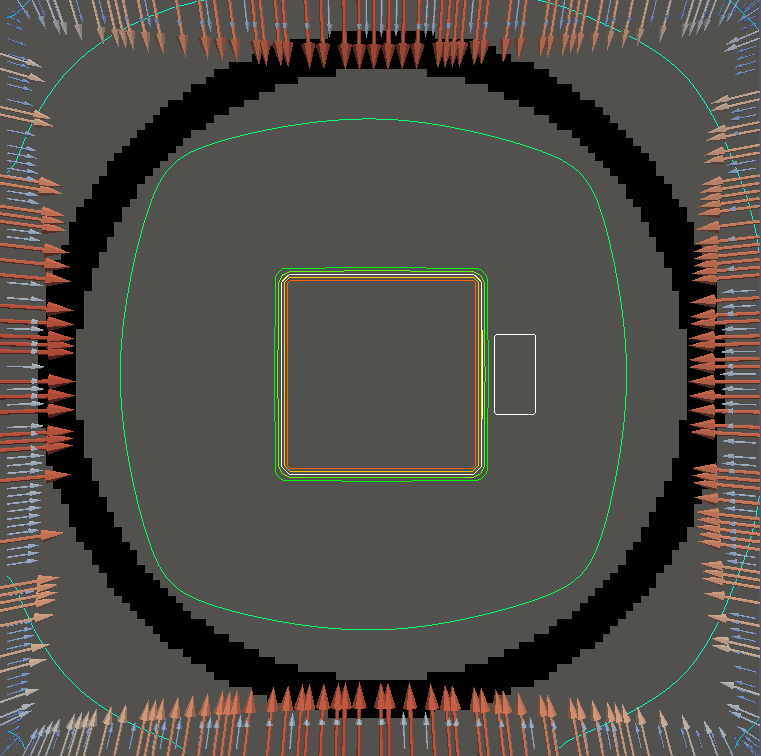}
}\\
\subfigure[Tracking, Intermediate Target]{
\label{fig:trackinterlow}
\includegraphics[height=5cm]{./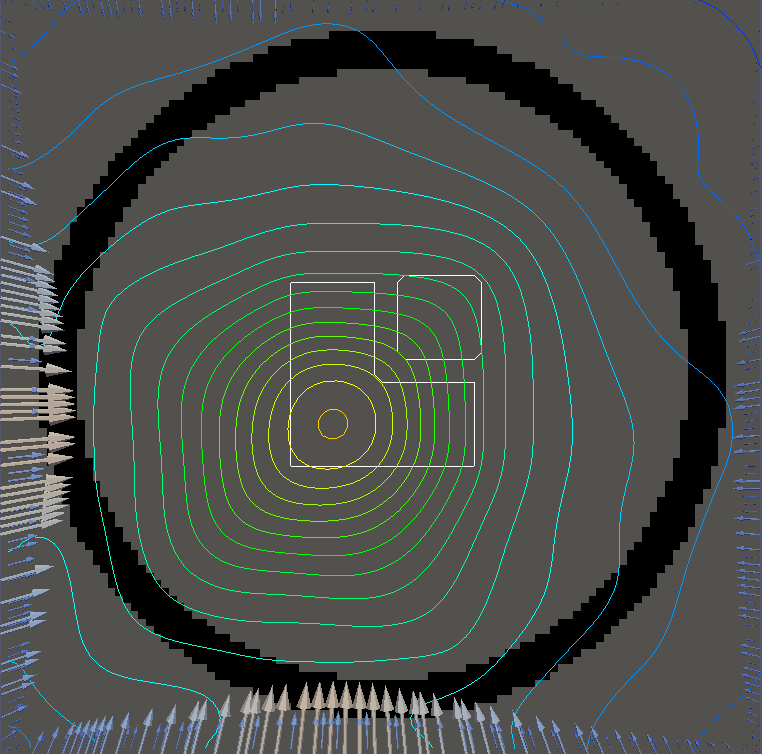}
}
\subfigure[Surviving Fraction, Intermediate Target]{
\label{fig:sfinterlow}
\includegraphics[height=5cm]{./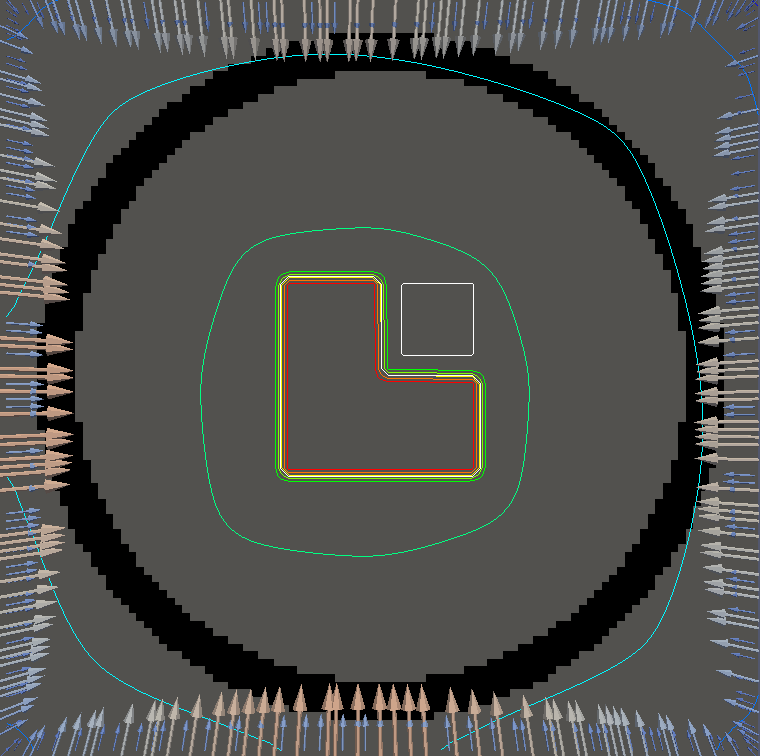}
}\\
\subfigure[Tracking, Complex Target]{
\label{fig:trackcomplexlow}
\includegraphics [height=5cm]{./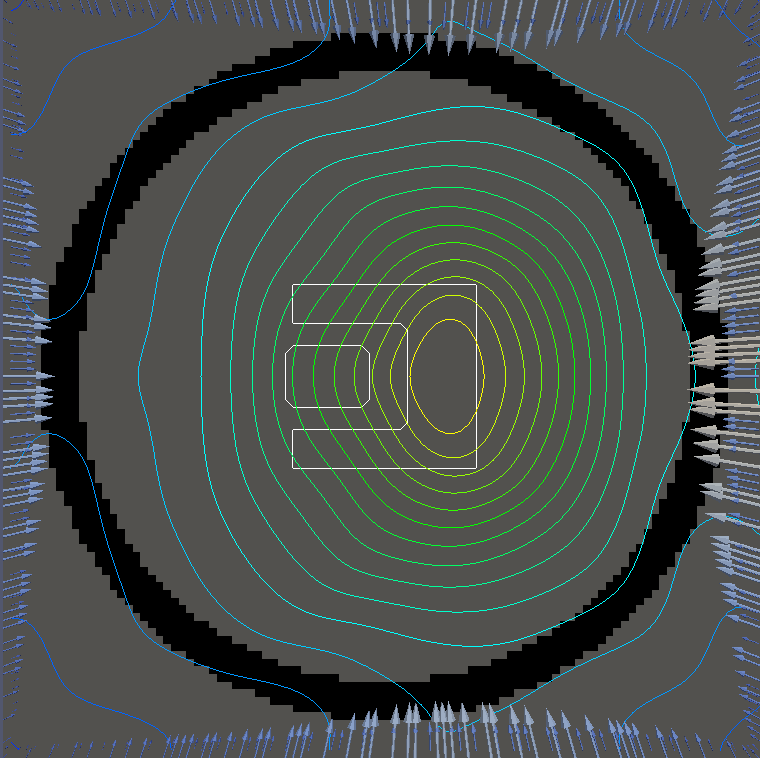}
}
\subfigure[Surviving Fraction, Complex Target]{
\label{fig:sfcomplexlow}
\includegraphics[height=5cm] {./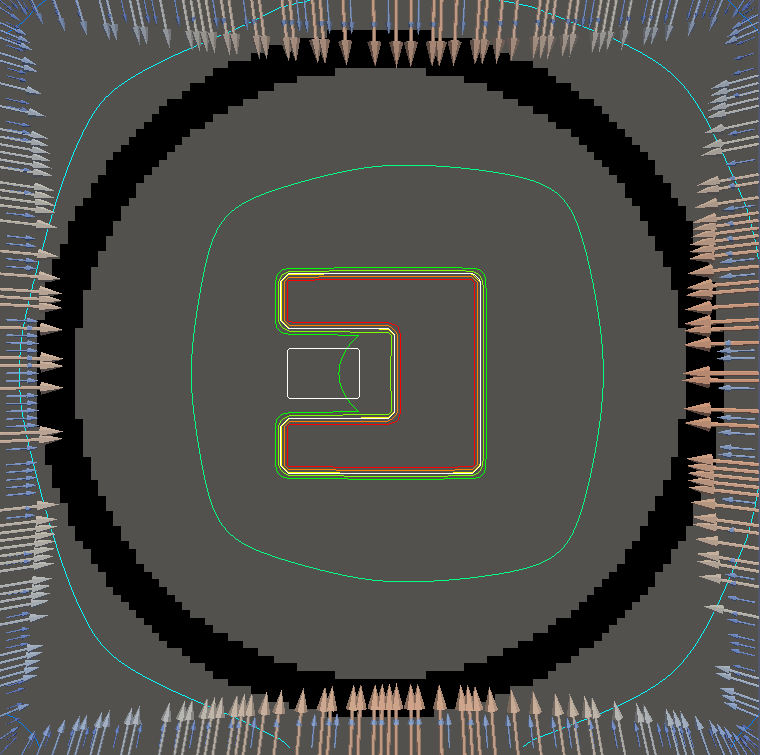}
}
\caption{Optimal results with lower penalty to dose in $Z_R$}
\label{fig:optlowrisklow}
\end{figure}
\setcounter{subfigure}{0}

We conclude with a final set of numerical examples which restrict the location of the source by altering the definition of $U(x).$  Here we require that $q\leq \delta$ on one side of the boundary.  For the basic and intermediate case, we require that the external source not come from the left side of $Z.$  For the complex case, we disallow sources on the right side  (as the optimal source is nearly zero on the right side in the complex case for $\mathcal{P}_T$).  Figure \ref{fig:optblocked} shows the optimal solution for both problems, using the same penalization parameters used in Figure \ref{fig:optdose}.  The optimal dose for $\mathcal{P}_{T}$ is significantly worse, with the tumor in the intermediate and complex cases getting a dose below $3.$ However, the tumor cells have a survival of $10\%$ or less for each case and the risk region has a survival rate of $60\%$ or higher in each case. 
\begin{figure}
\centering
\subfigure[Tracking, Basic Target]{
\label{fig:trackbasicblock}
\includegraphics [height=5cm]{./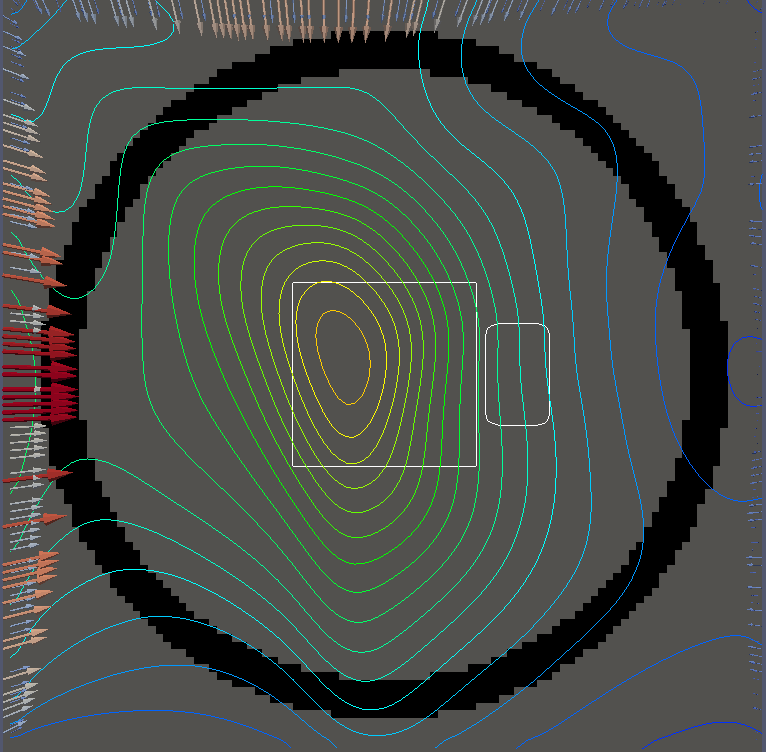}
}
\subfigure[Surviving Fraction, Basic Target]{
\label{fig:sfbasicblock}
\includegraphics [height=5cm]{./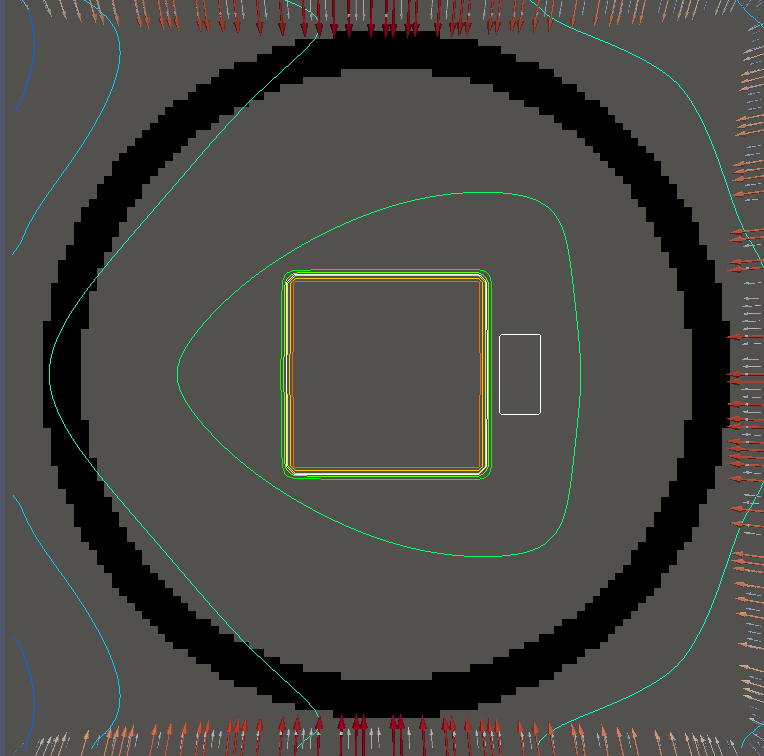}
}\\
\subfigure[Tracking, Intermediate Target]{
\label{fig:trackinterblock}
\includegraphics [height=5cm]{./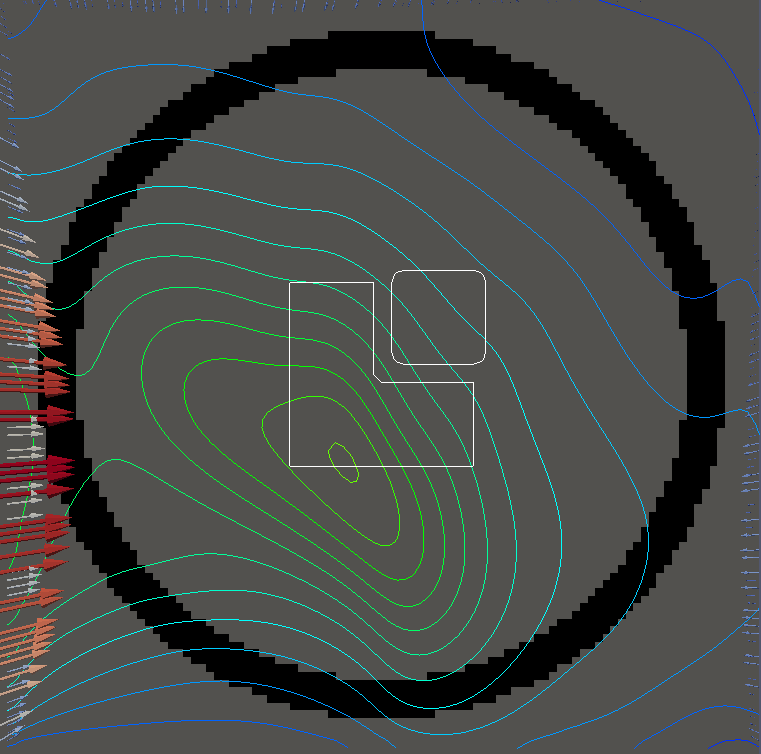}
}
\subfigure[Surviving Fraction, Intermediate Target]{
\label{fig:sfinterblock}
\includegraphics[height=5cm]{./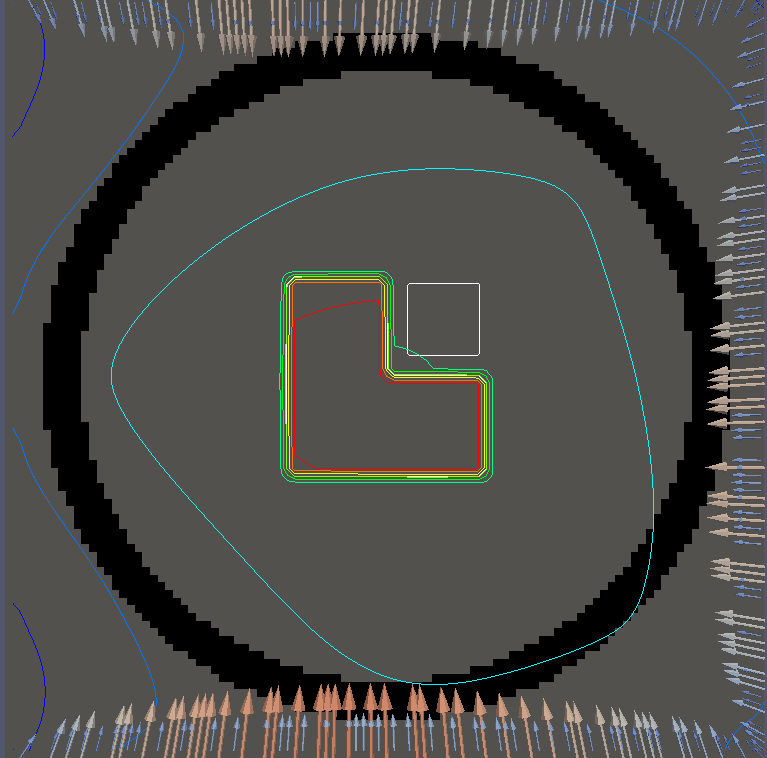}
}\\
\subfigure[Tracking, Complex Target]{
\label{fig:trackcomplexblock}
\includegraphics [height=5cm]{./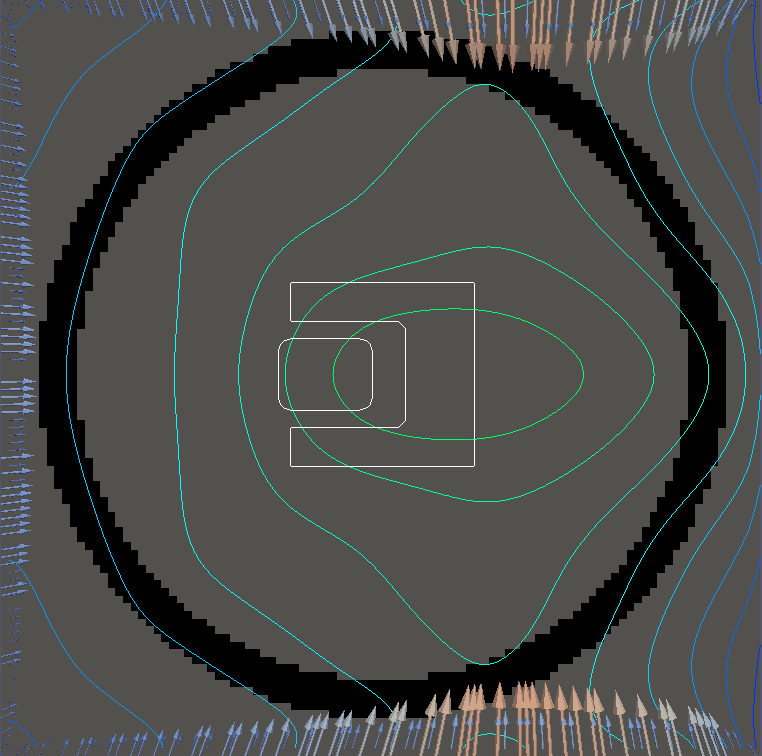}
}
\subfigure[Surviving Fraction, Complex Target]{
\label{fig:sfcomplexblock}
\includegraphics [height=5cm]{./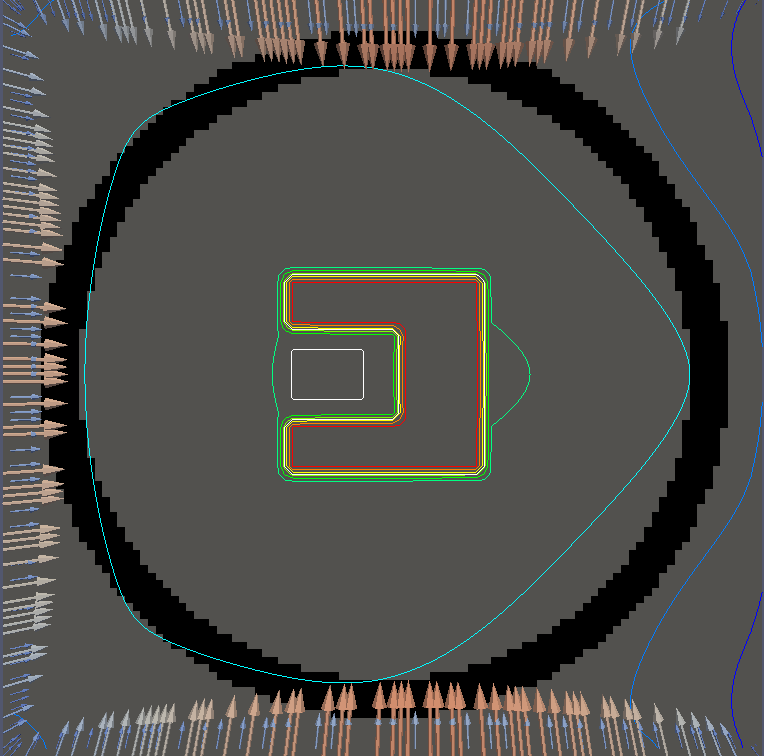}
}
\caption{Optimal results with one edge blocked}
\label{fig:optblocked}
\end{figure}
\setcounter{subfigure}{0}

Clearly, the choice of relative weights $c_T,c_R$ and $a_i$ affects the optimal solution.  However, we have currently no systematic {\em a priori} method of selecting the relative weights in either problem.  Such a method should take into account the relative importance of the different tissues and the susceptibilities of the various cell types, leading to a possibly nonintuitive scheme.  Without such a method, several optimization runs may be required to find an appropriate treatment strategy.  We also note that the optimization does not consider the feasibility of the optimal source. In fact, the optimal boundary control in all of the numerical results are spread out along the boundary, as opposed to being focused in a beam configuration.  Further modeling would be required in order for the cost function to penalize non-beam like configurations.

\section*{Acknowledgments} 
This work has been supported by HE5386/8-1, FR 2841/1-1 and the Seedfunds of RWTH Aachen. 

\bibliographystyle{gOPT}
\bibliography{references}
\end{document}